\newtheorem{theorem}{Theorem}[section]
\newtheorem{lemma}[theorem]{Lemma}
\newtheorem{prop}[theorem]{Prop}
\theoremstyle{definition}
\newtheorem{definition}[theorem]{Definition}
\newtheorem{example}[theorem]{Example}
\theoremstyle{remark}
\newtheorem{remark}[theorem]{Remark}
\newtheorem{question}[theorem]{Question}
\numberwithin{equation}{section}
\newcommand{\C}{\mathbf{C}}
\newcommand{\Z}{\mathbf{Z}}
\newcommand{\Q}{\mathbf{Q}}
\newcommand{\F}{\mathbf{F}}
\newcommand{\SLC}{\operatorname{SL}(2, \C)}
\newcommand{\SL}{{\operatorname{SL}(2, \R)}}
\newcommand{\PSLC}{{\operatorname{PSL}(2,\C)}}
\newcommand{\PSLZ}{{\operatorname{PSL}(2,\Z)}}
\newcommand{\PGL}{{\operatorname{PGL}(2,\R)}}
\newcommand{\R}{\mathbf{R}}
\newcommand{\PSL}{{\operatorname{PSL}(2,\R)}}
\newcommand{\SO}{{\operatorname{SO}(2,\R)}}
\newcommand{\SU}{{\operatorname{SU}(2,\C)}}
\newcommand{\Dev}{{\operatorname{Dev}}}
\newcommand{\Mod}{{\operatorname{Mod}}}
\newcommand{\Homeo}{{\operatorname{Homeo}}}
\newcommand{\Hom}{{\operatorname{Hom}}}
\newcommand{\Aut}{{\operatorname{Aut}}}
\newcommand{\Int}{{\operatorname{Int}}}
\newcommand{\Sym}{{\operatorname{Sym}}}
\newcommand{\Tr}{{\operatorname{Tr} }}
\newcommand{\Id}{{\operatorname{Id}}}
\newcommand{\Isom}{\operatorname{{Isom}}}
\newcommand{\CPS}{{\mathbf {C}P^1}}
\newcommand{\CS}{{\mathbf{C}}}
\newcommand{\HS}{{\mathbb{H}}}
\newcommand{\RPS}{{\mathbf{R}P^1}}
\begin{document}

\title{A discreteness algorithm for 4-punctured sphere groups}

\author{Caleb Ashley}
\address{Department of Mathematics, University of Michigan, Ann Arbor, Michigan , 48109}
\email{cjashley@umich.edu}



\dedicatory{In memoriam: This paper is dedicated to the memory of my beloved friend and mentor, \\Ralph Brooks Turner, Ph.d. `Last of the free spirits.'}

\keywords{the discreteness problem, discrete subgroups of $\PSL$, hyperbolic geometry, $\SLC$-character varieties, $(G,X)$-structures.}

\begin{abstract}
Let $\Gamma$ be a subgroup of $\PSL$ generated by three parabolic transformations. The main goal of this paper is to present an algorithm to determine whether or not $\Gamma$ is discrete. Historically discreteness algorithms have been considered within several broader mathematical paradigms: the \textit{discreteness problem}, the construction and deformation of \textit{hyperbolic structures} on surfaces and notions of \textit{automata} for groups. Each of these approaches yield equivalent results. The second goal of this paper is to give an exposition of the basic ideas needed to interpret these equivalences, emphasizing related works and future directions of inquiry.    
 
\end{abstract}
\maketitle








\section{Introduction}\label{sec:intro}

Let $\Gamma$ be a subgroup of $\PSL$ generated by three parabolic transformations. The main goal of this paper is to present an algorithm to determine whether or not $\Gamma$ is discrete. Historically discreteness algorithms have been considered within several broader mathematical paradigms: the \textit{discreteness problem}, the construction and deformation of \textit{hyperbolic structures} on surfaces and notions of \textit{automata} for groups. Each of these approaches yield equivalent results. The second goal of this paper is to give an exposition of the basic ideas needed to interpret these equivalences, emphasizing related works and future directions of inquiry.   

Discreteness is central to the theory of \textit{geometric structures on manifolds.} Given a topological manifold $M$, by a geometric structure on $M$, or a \emph{$(G,X)$-structure} for short, one understands $M$ as being \textit{locally modeled} by the Lie group $G$ on the locally homogeneous space $X$ associated to $G$~\cite{Thurston1}~\cite{Goldman4}. For example if $G$ is $\Isom(X)$, the \textit{isometry group} of $X$, the action of the Lie group $G$ on $X$ preserves a \textit{Riemannian metric} on $X$, and hence on $M$. The construction of (global) \textit{symmetric spaces} $X = G / K$ where $G$ is a classical Lie group and $K$ is a maximal compact subgroup of $G$ demonstrates globally symmetric spaces are examples of locally homogenous spaces~\cite{WitteMorris}. Specific examples prominently featured in our discussion here are the \textit{hyperbolic plane} $\HS \simeq \SL / \SO$ and \textit{hyperbolic 3-space} $\HS^3 \simeq \SLC / \SU$, along with their respective \textit{isometry groups} $\PSL$ and $\PSLC$. 

\begin{example}\label{ex:Fuchsian} 
Let $\Gamma <\PSL$, be a finitely generated non-elementary subgroup.  If $\Gamma$ acts on $\HS$ properly discontinuously and freely, then $\Gamma \backslash \HS$ is biholomorphic to a Riemann surface which admits a complete hyperbolic metric. Notice the use here of the Uniformization theorem, identifying the \textit{conformal structure} with \textit{hyperbolic structure}~\cite{Beardon}~\cite{Katok}. $\Gamma$ is an example of a \textit{Fuchsian} group.
\end{example}

\begin{example}\label{ex:Kleinian} 
Let $\Gamma < \PSLC$ be a finitely generated non-elementary subgroup. If $\Gamma$ acts on $\HS^3$ properly discontinuously and freely, then $\Gamma \backslash \HS^3$ is homeomorphic to the interior of a compact 3-manifold which admits a complete hyperbolic metric. Notice the analogous uniformization theorem here is W. Thurston's classification of hyperbolic 3-manifolds, in particular the version of \textit{the Geometrization Conjecture}\footnote{Thurston's revolutionary work first suggested an analogous theorem to Uniformization might be true in 3-dimensions~\cite{Thurston2}.} proved by Thurston~\cite{Thurston2}~\cite{Marden1}. $\Gamma$ is an example of a \textit{Kleinian} group.
\end{example}

For a fixed locally homogeneous space $X$ with isometry group $\Isom(X)$, the discreteness problem may be described as the creation of a \textit{procedure} which certifies whether or not a given subgroup $\Gamma$ of $\Isom(X)$ is discrete. A procedure which \textit{halts} in a finite number of steps is called an \textit{algorithm}. Discreteness algorithms were originally pursued in the context of Examples~\ref{ex:Fuchsian} and ~\ref{ex:Kleinian}~\cite{Stillwell1}~\cite{Riley}~\cite{GilmanMaskit}. Parametrizing $(G,X)-$structures admitted by some manifold $M$ leads to a very rich theory~\cite{BersandGardiner}~\cite{Bers3}~\cite{Canary1}~\cite{Goldman4}~\cite{Goldman5}~\cite{Labourie1}~\cite{Marden1}~\cite{Maskit2}\newline~\cite{Papadopoulos1}~\cite{Thurston1}~\cite{Thurston2}~\cite{Wolpert1}.  Discreteness --- within \textit{any} programme to investigate the manner in which a given manifold admits geometric structures and the interplay between these structures --- must be understood as central precisely by its exhibition of geometry via \textit{symmetry}. Herein we are primarily concerned with the particular case of \textit{hyperbolic structures} on surfaces. In particular, we understand $\Gamma$ as defined in \textit{Theorem~\ref{thm:MAIN1}} below as answering the following question: Does $\Gamma$ determine a \textit{complete hyperbolic structure} on the \textit{four-punctured sphere} $S_{0,4}$?

One strategy for understanding subgroups $\Gamma < \PSL$ is to construct good \textit{fundamental domains} for the action of $\Gamma$ on $\HS$. \textit{Reduction theory} (the theory of constructing ``good" fundamental domains for group actions) has a long history, from Gauss's work ``Reduction Theory of Quadratic Forms,"  to Borel--Harish-Chandra's ``Arithmetic subgroups of algebraic groups." As an example of this kind of phenomenon, it is a theorem of Siegel that $\Gamma \backslash \HS$ has finite area if and only if $\Gamma$ is \textit{finitely generated.} Whereby the existence of a finite sided \textit{fundamental domain} $\triangle$ ($\triangle$ is a convex polygon possibly with vertices on the boundary of $\HS$) for the $\Gamma$ action on $\HS$ corresponds to combinatorial-group-theoretic information, namely $\Gamma$ is \textit{finitely generated}.  However the analogue of this fact is not necessarily true for $\Gamma$ Kleinian. There are finitely generated Kleinian groups which are not \textit{geometrically finite}~\cite{Canary1}. 

Given an arbitrary \textit{non-elementary} subgroup $\Gamma < \PSL$, a necessary condition for discreteness is \textit{J\o rgensen's inequality}~\cite{Jorgensen}. A consequence of J\o rgensen's inequality is the following (rather intractable example of a) sufficient condition for discreteness: $\Gamma$ is discrete only if every \textit{non-elementary} \textit{rank 2} subgroup is discrete. Rank 2 means $\Gamma$  is generated by two elements, $\Gamma = \langle A,B\rangle$ for $A,B \in \PSL$.  The \textit{Poincar\'e polygon theorem} is an example of a more \textit{effective} sufficient condition, more effective in the sense that it holds for polygons $\triangle$ embedded in the hyperbolic plane $\HS$ with rational endpoints~\cite{Maskit1}. However, necessary and sufficient criteria for discreteness \textit{can not} exist, via the following work of Gilman-Maskit. 

\begin{theorem}\label{thm:GM} [Gilman-Maskit] For  $\Gamma < \PSL$, a non-elementary rank 2 subgroup, this discreteness problem was settled by J. Gilman \& B. Maskit~\cite{GilmanMaskit}. The solution to the discreteness problem is an algorithm, we will denote it by the $GM-$algorithm. Via analysis of the complexity of this algorithm, no necessary and sufficient condition for discreteness exists~\cite{GilmanMaskit}~\cite{Gilman97}.
\end{theorem}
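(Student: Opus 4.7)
The plan is to reduce discreteness of a rank $2$ non-elementary subgroup $\Gamma = \langle A, B\rangle < \PSL$ to the behavior of its Fricke trace triple $(x,y,z) = (\tr A, \tr B, \tr AB)$ under Nielsen transformations, thereby moving the question into the $\SL$-character variety of the free group $F_2$. Because $\Gamma$ is non-elementary, the commutator trace polynomial $\kappa(x,y,z) = x^2 + y^2 + z^2 - xyz - 2$ is nonzero, and the triple is well-defined up to the sign action of $\Out(F_2)$. The Nielsen moves act by explicit polynomial automorphisms of this variety, for instance $(x,y,z) \mapsto (x, z, xz - y)$ and its cyclic analogues, and the strategy is to combine these moves with a complexity function — essentially the minimum of $|\tr W|$ over cyclically reduced words of bounded length in $A, B$ — to drive any given pair of generators into one of finitely many normal forms.

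First I would perform a case analysis according to the isometry types (hyperbolic, parabolic, or elliptic) of $A$, $B$, and $AB$. For each branch one shows that either a prescribed Nielsen move strictly decreases the complexity, or the triple already lies in an explicit region of the character variety where discreteness can be decided by hand. In the terminal regions, discreteness would be certified by a Poincar\'e polygon argument that exhibits a convex fundamental domain with side pairings given by the current generators, while non-discreteness would be certified by an explicit violation of J\o rgensen's inequality. Termination would follow by showing that the complexity takes values in a well-ordered set and that the algorithm cannot descend indefinitely without reaching a terminal configuration.

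To extract the second half of the statement — that no necessary and sufficient closed-form condition for discreteness exists — I would identify the sequence of Nielsen moves executed by the algorithm with a continued-fraction-like expansion of $(x,y,z)$. By exhibiting parametrized families of triples for which this expansion has unbounded length (so that the algorithm requires arbitrarily many moves before terminating), one shows that the verification time is unbounded on the discrete locus. Consequently, no finite conjunction of polynomial (or more generally semi-algebraic) inequalities in the Fricke coordinates can simultaneously carve out and detect the discrete subset, which is the precise content of the non-existence claim.

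The main obstacle I anticipate is the construction of the complexity measure together with the proof of its strict monotonicity under the chosen Nielsen moves: without a well-founded decreasing invariant the reduction could cycle indefinitely, and the partition of terminal configurations into \emph{manifestly discrete} versus \emph{manifestly indiscrete} types would fail to be exhaustive. Controlling the interaction of the three trace coordinates under the Nielsen action of $\Out(F_2)$ is the combinatorial heart of Gilman–Maskit, and it is precisely where the case analysis, the polygon construction, and the complexity bound must be calibrated against one another.
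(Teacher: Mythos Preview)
This theorem is not proved in the paper; it is stated as a cited result of Gilman--Maskit and Gilman, and the paper's only treatment of it is the expository summary in Section~3.1. So there is no ``paper's own proof'' to compare against in the strict sense --- only a description of the algorithm's structure.

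That said, your sketch is broadly in line with that description: the paper also frames the $GM$-algorithm as trace minimization via Nielsen moves (switch, inversion, twist) combined with the Poincar\'e/J\o rgensen dichotomy, with termination when either a fundamental polygon is built or J\o rgensen's inequality fails. Your character-variety/Fricke-coordinate phrasing is a legitimate reformulation of the same content, and your continued-fraction argument for unbounded run time matches the spirit of Gilman's complexity analysis cited in the paper.

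Where your outline diverges from the paper's summary: the paper emphasizes a specific case split into \emph{intertwining} versus \emph{non-intertwining} pairs (hyperbolic generators with intersecting axes versus all others), rather than a case split purely by isometry type of $A$, $B$, $AB$. It also singles out two ingredients you do not mention: the \emph{Coxeter extension} $\widetilde\Gamma = \langle A, B, AB\rangle$ (passing to a reflection supergroup so that discreteness of $\Gamma$ and $\widetilde\Gamma$ are equivalent) and the \emph{acute triangle theorem}, which is what actually controls the finite-order-elliptic question and makes termination work in the intertwining case. Your proposed complexity function --- minimum trace over short words --- is close in spirit, but the paper describes the geometric invariant as the length of the longest side of the hyperbolic triangle cut out by the axes, and it is the interplay of that triangle with the Coxeter reflections that drives the monotonicity. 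If you were to flesh this out, those are the pieces you would need to supply; without the Coxeter extension and the acute-triangle control, the ``strict monotonicity'' step you flag as the main obstacle is indeed where the argument would stall.
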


 The $GM-$algorithm couples algebraic and geometric conceptual procedures, each one informing the other. Ultimately, the method of proof utilizes the intrinsic geometry of the hyperbolic plane to determine if a \textit{fundamental domain} $\triangle$, can be built for the $\Gamma$ action on $\HS$. The following first appeared in~\cite{Ashley}. 

\begin{theorem}\label{thm:MAIN1}[A.]
Let $A$, $B$ and $C$ be \textit{parabolic} elements of $\PSL$. For $\Gamma = \langle A,B,C\rangle$ non-elementary free subgroup, a procedure to determine whether or not $\Gamma$ is discrete exists. Furthermore, the procedure is an algorithm if $\Gamma$ has no elliptic elements, we will denote as the \emph{$4PS-$algorithm}, since $\Gamma$ corresponds to a four-punctured sphere group. 
\end{theorem}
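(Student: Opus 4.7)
The plan is to extend the Gilman--Maskit scheme (Theorem~\ref{thm:GM}) from rank $2$ to the three-parabolic setting: interleave \emph{Nielsen reductions} of the generating triple $(A,B,C)$ with attempts to build an ideal hexagonal fundamental polygon, and apply the Poincar\'e polygon theorem as the certificate of discreteness.

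First, normalize by conjugating in $\PSL$ so that the three parabolic fixed points $p_A, p_B, p_C \in \partial \HS$ occupy a canonical position, and then assemble a candidate ideal hexagon $\Delta \subset \HS$ whose six ideal vertices include $p_A, p_B, p_C$ and whose sides are identified in pairs by $A^{\pm 1}, B^{\pm 1}, C^{\pm 1}$. Before testing $\Delta$, run the $GM-$algorithm on each rank $2$ subgroup $\langle A,B\rangle$, $\langle A,C\rangle$, $\langle B,C\rangle$: since discreteness of $\Gamma$ forces discreteness of each of these subgroups (a consequence of J\o rgensen's inequality), any non-discrete output halts the procedure with the correct answer.

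Next, verify the Poincar\'e polygon conditions for $\Delta$: convexity, that the proposed side-pairings are indeed isometries between the paired sides, and that the vertex cycles at the ideal vertices close up, each cycle being stabilized by a conjugate of $A$, $B$, $C$, or a fourth parabolic element such as $ABC$. If these conditions hold, Poincar\'e's theorem certifies that $\Gamma$ is discrete with presentation dictated by the cycle relations, exhibiting the quotient $\Gamma \backslash \HS$ as a four-punctured sphere. If some cycle fails to close, the obstruction is a witness element $w \in \Gamma$ with $w \cdot \Delta$ overlapping $\Delta$ improperly, which one then uses to perform a Nielsen move replacing one of $A, B, C$ by a shorter word in the generators before returning to the normalization step.

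The main obstacle is \emph{termination}. One must introduce a complexity measure on marked generating triples --- naturally the Fricke coordinates $(\tr AB, \tr BC, \tr CA, \tr ABC)$, or equivalently the shape parameters of $\Delta$ --- so that each Nielsen move strictly decreases complexity and only finitely many triples lie below any fixed threshold. The hypothesis of no elliptic elements is exactly what rules out infinite descent: elliptics would contribute rotations of arbitrarily small angle, producing accumulation of reduced triples as already happens in the Fuchsian rank $2$ case with torsion. Showing that the reduction always terminates in the torsion-free case, and that in all cases the procedure correctly halts (producing either a Poincar\'e certificate or an explicit J\o rgensen-style obstruction), constitutes the technical heart of the argument.
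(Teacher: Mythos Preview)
Your overall architecture --- normalize, attempt a Poincar\'e certificate, otherwise perform a Nielsen reduction and repeat --- matches the paper's strategy. But two concrete ingredients are missing, and without them the termination claim is not justified.

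First, the paper singles out a phenomenon that has no rank-$2$ analogue (Example~\ref{ex:minimallydirectedpairwise}): for a rank-$3$ generating set, Nielsen moves need \emph{not} decrease the relevant traces. Replacing $B$ by $ABA^{-1}$ leaves every generator trace unchanged while $|\Tr(B^{A}C)|$ \emph{increases}. So your assertion that ``each Nielsen move strictly decreases complexity'' is false as stated, and this is exactly the obstruction your proposal would have to overcome. The paper handles it by restricting to \emph{coherently oriented pairwise} generating sets and by replacing abstract Nielsen moves with a specific schedule of conjugations $C\mapsto C^{A^{n}}$, $C\mapsto C^{B}$ governed by explicit inequalities on the Ford parameters $(x,y,z)$; Lemma~\ref{lem:conjB} then gives a quantitative lower bound (of order $\epsilon^{2}$ or $\delta^{2}$) on the decrease at each step. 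Your Fricke-coordinate complexity is plausible, but you would need an analogous monotonicity lemma, and the paper's Example shows this cannot be had for generic Nielsen moves.

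Second, the paper does not run the $GM$-algorithm on the three rank-$2$ subgroups as a preprocessing step, and for good reason: discreteness of all three $\langle A,B\rangle$, $\langle A,C\rangle$, $\langle B,C\rangle$ does not imply discreteness of $\langle A,B,C\rangle$, so that check furnishes only a one-sided test and does not feed into the construction of a fundamental domain for the full group. Instead the paper works directly with a Ford domain for $\Gamma$ (Lemma~\ref{fundamentaldomain}), using the concrete parameters $(x,y,z)$ --- the separation of fixed points and the Ford strengths of $B$ and $C$ --- rather than an ideal hexagon. The decisive tests are Lemmas~\ref{lem:FordStrength} and~\ref{lem:testelliptics}, which detect an elliptic in $\{A^{n}BC\}$ (forcing non-discreteness under the no-elliptics hypothesis) or certify that the outer Ford distance of $BC$ is below $2$ (forcing discreteness). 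Your hexagon picture could in principle be made to work, but you would need to supply the analogues of these lemmas and the explicit decrease bound; as written, the ``technical heart'' you flag is precisely what remains to be done.
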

\textsc{outline:} In Section 2 we present fundamental ideas of hyperbolic structures on surfaces. In Section 3 we summarize the $GM-$algorithm and notions of automata for groups. In Section 4 we give a self contained proof of Theorem~\ref{thm:MAIN1}. In Section 5 we conclude with brief interpretations and indications of future directions of inquiry. 

\specialsection*{Acknowledgements}
I am sincerely grateful to NAM and the AMS for the opportunity to be a part of this special 50th Anniversary proceedings. I would like to thank Todd Drumm and William Goldman for initiating me into this work while I was a graduate student at Howard University. I am also deeply indebted to Karen Smith and Richard Canary for mentoring me as a postdoctoral researcher at the University of Michigan.  I have also benefitted greatly from affiliation with the GEAR Network, MSRI, AMS-MRC, and NAM; I happily thank my colleagues and friends for their support and encouragement.  Finally, I thank each of the referees, for patiently reading earlier drafts of this paper and making many helpful comments and suggestions. 

\section{Background}

The purpose of this section is to habilitate the $4PS-$algorithm within the mathematical theory in which it lives. We begin with examples of constructions of Riemann surfaces, and proceed with deformations of geometric structures on surfaces, focusing on hyperbolic structures in the paradigm of locally homogenous structures. 

\begin{definition}\label{def:RS}A \textit{Riemann surface} $S$ is a 1-dimensional complex manifold. 
\end{definition}

As a consequence of Definition~\ref{def:RS}, Riemann surfaces are orientable, triangulizable smooth surfaces on which a certain collection complex valued continuous functions are designated as \textit{holomorphic}.  One way of defining such a \textit{conformal structure} on a given oriented smooth surface is to construct (via Gauss) a Riemannian metric and define \textit{local coordinate charts} as conformal homeomorphisms to $\C$~\cite{Bers3}.

\begin{definition}\label{def:hyperbolicstructure}
A Riemann surface $S$ is called \textit{hyperbolic}, if it admits a \textit{Riemannian metric} $ds_{\HS}$ of constant negative curvature,  where the boundary of $S$ (if nonempty) is totally geodesic~\cite{FarbMargalit}. 
\end{definition}


\subsection{Construction of Riemann surfaces}\label{sec:constructions}

We desire to distinguish between Riemann surfaces $S$ and the underlying topological surface $\Sigma$. We will denote \textit{closed} (compact, no boundary) Riemann surfaces of \textit{genus} $g$ by  $S_g$ and Riemann surfaces of \textit{finite type} (genus $g$, $n$ \textit{marked points}) by $S_{g,n}$; similarly  $\Sigma_{g}$ and $\Sigma_{g,n}$ (genus $g$, $n$ punctures) denote the underlying topological surfaces of $S_g$ and $S_{g,n}$, respectively. The fundamental group of any surface will be denoted by $\pi_1$.
 
 \begin{example}(Algebraic Construction)  Riemann surfaces\footnote{Riemann surfaces first appeared in the Ph.D thesis of Bernhard Riemann, [1851] ``Theory of functions of one complex variable."} were originally described as maximal domains on which holomorphic functions (themselves multi-valued via the process of \textit{analytic continuation} on domains $\Omega$ of $\C$) become single-valued. So described, Riemann surfaces are manifestly \textit{algebraic}, namely as $n$-sheeted \textit{branched coverings} of the complex projective line, $\CPS$.  The details of this construction yield a canonical method of \textit{compactification}, thus there is a natural correspondence between \textit{irreducible smooth projective algebraic curves} defined over $\C$ and \textit{compact} Riemann surfaces~\cite{Donaldson}~\cite{FarkasKra}~\cite{FrickeandKlein}.
\end{example}
 
 \begin{example}(Arithmetic Construction) A natural question regarding the algebraic construction is: when does a Riemann surface correspond to an irreducible algebraic curve defined over $\overline{\Q}$? Examples involve \textit{the Modular group} $\PSLZ$, \textit{congruence subgroups}, or any \textit{arithmetic subgroup} of $\PSL$~\cite{FrickeandKlein}~\cite{Reid}. 
 \end{example}

\begin{example}\label{exam:fundamentalgroup} (Topological Construction) The topological classification of surfaces gives a combinatorial representation of $\pi_1$ as a $(4g +n)-$gon with $2g$ side-pairing identifications:
\begin{equation}\label{fundamentalgroup}
\pi_1 = < a_1, b_1, \ldots , a_g, b_g, c_1, \ldots, c_n \ \big{|}
|  [a_1,b_1]  \cdots [a_g,b_g] \cdot c_1 \cdots c_n = 1 >. 
\end{equation}
$[a_i,b_i] = a_i\cdot b_i \cdot {a_i}^{-1} \cdot {b_i}^{-1}$ denotes the \textit{commutator} of the side-pairing generators $a_i$ and $b_i$ and the $c_i$ are boundary generators~\cite{Donaldson}.
The \textit{Euler characteristic}, denoted $\chi (\Sigma_{g,n})$ is a topological invariant and is given by $(2-2g)-n$. It is a classical result, that if $n \ge1$ then $\pi_1$ is isomorphic to a \textit{free group} of rank $N$, denoted $\F_N$, where $N= 2g +n +1$. For example,  with $\Gamma$ as in Theorem~\ref{thm:MAIN1}, $ \pi_1  \simeq \F_3$, a free product of three \textit{infinite-cyclic} groups~\cite{Stallings}.
 \end{example}

\begin{example}\label{exam:Uniformization}[Uniformization Theorem: Poincar\'e, Kobe, Klein] If $X$ is a simply connected Riemann surface, then $X$ is isomorphic to $\CPS$, $\CS$ or $\HS$~\cite{Springer}.
\end{example}

The isomorphism above is with respect to the Riemann surface structure. I.e. the \textit{uniformizing} isomorphism is a \textit{diffeomorphism} that respects the holomorphic structure, i.e. it is a \textit{holomorphic isomorphism}~\cite{Donaldson}. Since holomorphic maps are conformal, near every point of $S$ (locally) the metric alluded to in the remarks after Definition~\ref{def:RS} can be written as $ds = \lambda(z)|dz|$, where $z$ is the ``uniformizing" parameter, and an appropriate choice of \textit{conformal factor} $\lambda(z)$, can be made so $\kappa$ is $-1$. 

An equivalent statement of uniformization is: every connected Riemann surface $S$ is biholomorphic to $ \Gamma \backslash X$, where $X = \CPS, \CS,$ or $\HS$ and $\Gamma$ acts on $X$ \textit{properly discontinuously}, \textit{freely}, and by \textit{biholomorphisms}~\cite{Beardon}. This implies:
\begin{enumerate} 
\item $\pi_1 $ acts by isometries on $X$,
\item $\pi_1$ is isomorphic to  $\Gamma$, where $\Gamma$ is a discrete subgroup of $\Isom(X)$.
\end{enumerate}

The $\pi_1$ action is by isometries, since the conformal automorphisms of $X$ are precisely the isometries of $X$, i.e. $\Aut(X) =  \Isom (X)$.\footnote{This is often described as ``a miracle" of low-dimensional topology!} Uniformization is a powerful conceptual bridge, providing a correspondence between \textit{conformal structures} and \textit{metric structures} on a surface. For example, thinking of a Riemann surface  $\S_{g,n}$ with negative Euler characteristic as coming from a uniformization identifies $\pi_1$ isomorphically with a discrete subgroup $\Gamma$ of $\PSL$. Indeed if $\chi(\Sigma) <0$, the \textit{Gauss-Bonnet} theorem implies that  $\Sigma$ is a Riemann surface which admits a metric of constant negative curvature $\kappa$, hence its universal cover is $X = \HS$, and $\Isom(X) = \PSL$.    

\begin{remark} It is worthwhile to note that uniformization is a deep analytic result and is by no means a straightforward constructive process. Also, Examples~\ref{exam:fundamentalgroup} and~\ref{exam:Uniformization} imply that the Riemann surface detected by Theorem~\ref{thm:MAIN1} must be the four-punctured sphere, $\Sigma_{0,4}$. Since $\Gamma \simeq \F_3$ and $\Gamma$ acting freely on $\HS$ as in Theorem~\ref{thm:MAIN1} implies $\Gamma \backslash \HS \simeq S$ has four \textit{ends} which are \textit{cusps}. I.e. $\Gamma \backslash \HS \simeq S$ has four ends which are complete and have finite area. 
\end{remark}

 The following dynamical construction is a one of the first methods of building examples of Fuchsian and Kleinian groups~\cite{Beardon}~\cite{ChenANDTo}.

\begin{example}\label{exam:pingpong}(Ping-Pong Lemma) Let $A,B \in \PSL$ and let \newline ${U_A}^{-}, {U_A}^{+}, {U_B}^{-}, {U_B}^{+}$ be pairwise disjoint \textit{half-spaces} $\subset \HS$ such that ${U_A}^+$ is \textit{pairwise disjoint} with ${U_B}^-$ and ${U_A}^-$ is \textit{pairwise disjoint} with ${U_B}^+$. If  $A \cdot(\mathbb{H} \backslash \{\overline{U_A^{-}}\}) = {U_A}^+$ and $B\cdot(\mathbb{H} \backslash \{\overline{U_B^{-}}\}) = {U_B}^+$ then all of the following are true:

\begin{enumerate}
 \item $\langle A,B \rangle = \Gamma < \PSL$ is freely generated by $A$ and $B$,
 \item $\Gamma$ is discrete,
 \item $\Gamma$ acts freely on $\mathbb{H}$,
 \item $\forall \gamma \in \Gamma \backslash \{\Id\}$, $\gamma$ is hyperbolic,
 \item $\mathbb{H} \backslash \big\{ \{\overline{U_A^{-}}\})  \cup \{ U_A^{+} \})  \cup \{\overline{U_B^{-}} \}   \cup \{ U_B^{+} \}   \big\}$ is a fundamental domain for the $\Gamma$ action on $\mathbb{H}$.
 \end{enumerate}
\end{example}

The following geometric construction is a sufficient condition for discreteness and a main part of the conceptual framework of the $GM-$algorithm~\cite{Maskit1}~\cite{Beardon}.
\begin{example}[Poincar\'e Polygon Theorem]
For all $d \ge 5$, there exists a \textit{regular} right-angled $d-$gon $\triangle$ embedded in $\HS$, so that $\Gamma$ (the \textit{double-cover} of the subgroup generated by reflections along the sides of $\triangle$) is a discrete subgroup of $\PSL$ is discrete. Furthermore $\triangle$ is a fundamental domain for the $\Gamma$ action on $\HS$ and $S \simeq \Gamma \backslash \HS$ is closed or finite area hyperbolic surface. [Note that the Riemann surface constructed in this manner may be a hyperbolic \textit{orbifold}. I.e. $S \simeq \Gamma \backslash \HS$ may be an \textit{incomplete} hyperbolic structure, meaning the hyperbolic metric may have \textit{conical singularities}.]
\end{example}

\subsection{Hyperbolic Geometry}

The intrinsic geometry of the \textit{hyperbolic plane}\footnote{There are several models for the hyperbolic plane. Each model has particular advantages and disadvantages for understanding phenomena in the hyperbolic plane~\cite{Beardon}~\cite{Katok}.} may be described via tools of Riemannian geometry as a (real) 2-dimensional  \textit{homogeneous} space which admits a metric of constant negative \textit{curvature}~\cite{Beardon}~\cite{ChenANDTo}\newline~\cite{Katok}~\cite{Milnor1}~\cite{Marden1}.  A \textit{model} for the hyperbolic plane is a pair $(X , ds_{X})$, where $X$ denotes the underlying set and  $ds_{X}$ denotes the metric. The \textit{Upper Half Plane} model of the hyperbolic plane is: $\big( \HS = \{ x + iy \in \CS | \ y > 0 \}$ , ${ds_{\HS}}= \frac{dx^2 + dy^2}{y^2}\big)$. 

\begin{equation}\label{PSL}
\PSL := \SL / \{ \pm \mathbb{I} \} = \{\left[\begin{array}{cc} a & b \\  c & d \end{array}\right] : a,b,c,d \in \R, ad - bc = 1\}  / \{ \pm \mathbb{I} \}.  
\end{equation} 
$\PSL$  acts on $\HS$ via \textit{M\"{o}bius transformations}. That is, for any $z \in \HS$, and for any $A \in \PSL$ (equivalently $-A)$,  $A\cdot z \mapsto \frac{az+b}{cz+d}$. The metric $ds_{\HS}$ induces a notion of hyperbolic distance denoted $d_{\HS}(z,w)$ between any two points $z,w$ in $\HS$. By joining $z$ to $w$ with a smooth curve $\gamma$ in $\HS$, the \textit{hyperbolic length} denoted $\ell_{\HS}(\gamma)$ is given by $\int_{\gamma} |ds_{\HS}|$. Finally, the hyperbolic distance $d_{\HS}(z,w)$ is defined as $ \inf_{\gamma} \ell_{\HS}(\gamma)$. 

\begin{definition}\label{def:Isometry} A mapping $f: \HS \rightarrow \HS$ is an \textit{isometry} if it preserves the hyperbolic distance, that is $d_{\HS}(f(z),f(w)) = d_{\HS}(z,w)$, for all $z,w \in \HS.$
\end{definition}

Viz \`a viz the overarching principle of symmetry, isometries are objects of extreme utility, as are \textit{geodesics}. For any two points $x,y$ not on the boundary, there is a unique geodesic between them. $\HS$ is therefore a \textit{geodesic space}. The boundary of $\HS$, denoted by $\partial_{\infty}\HS$, is the \textit{real projective line} $\RPS := \R \cup \{ \infty \}$. $\PSL = \Isom^+(\HS)$, the \textit{orientation preserving isometries} of $\HS$. This follows from the fact that $\PSLC$ is the group of conformal automorphisms of the complex projective line $\CPS$, where the action is also by M\"{o}bius transformations but the coefficients are restricted from $\C$ to $\R$. The restriction of the action from $\PSLC$ to $\PSL$ is characterized by fixing a $\RPS$ inside $\CPS$, equivalently $\PSL$ preserves a $\HS \subset  \CPS$. [This gives an alternative description of the hyperbolic metric $ds_{\HS}$ as a conformally Euclidean metric.] Furthermore, identifying $\R^2$ with $\C$ and using the geometry of complex numbers, along with the fact that M\"{o}bius transformations are generated by an even number of compositions of reflections in circles and lines, all isometries of $\HS$ can be understood to be generated by (possibly finite compositions of) the following: \textit{reflection} about $y-$axis $(x,y) \mapsto (-x,y)$, \textit{horizontal translation} $(x,y) \mapsto (x+c,y)$, \textit{scaling} or \textit{dilatation} $(x,y) \mapsto (cx,cy)$, \textit{inversion in the unit circle} $(x, y) \mapsto (\frac{x}{x^2 + y^2}, \frac{y}{x^2 + y^2}$)~\cite{Marden1}. 

\begin{definition}
Submanifolds of (real) dimension one which minimize distance are called \textit{geodesics}. 
\end{definition}
Via horizontal shifting and scaling, all geodesics of $\HS$ are characterized as either vertical lines or circular arcs that intersect $\R$ orthogonally. Similarly isometries may be classified geometrically by their \textit{fixed points.} $A \in \PSL$ is called: \textit{elliptic} if $A$ has one fixed point in $\HS$, \textit{hyperbolic} if $A$ has two fixed points on $\partial_{\infty}\HS$, \textit{parabolic} if $A$ has \textit{exactly} one fixed point on $\partial_{\infty}\HS$. Irrespective of the model of the hyperbolic plane, geodesics will be fixed as sets (not point-wise but globally) of \textit{involutions.} It is worthwhile to observe that hyperbolic isometries may also be characterized by open half-panes, as in the ping-pong construction of Example~\ref{exam:pingpong}. In particular, \textit{disjoint pairwise orthogonal} geodesics determine disjoint \textit{open half-spaces} which give a fundamental domain for the action of a hyperbolic isometry.  Alternatively isometries are classified algebraically. $A \in \PSL$ as in Equation~\ref{PSL} is elliptic, hyperbolic, or parabolic if $|\Tr A| < 2, |\Tr A| > 2,$ or $|\Tr A| =2$, respectively where $|\Tr| := |a+d|$ denotes the absolute value of the trace of $A$. The algebraic characterizations of geodesics are independent of model of the hyperbolic plane~\cite{Beardon}~\cite{Marden1}.  

The topology induced on $\HS$ by either the hyperbolic or Euclidean metric is the same. Therefore the action of any $\Gamma < \PSL$ \textit{extends} to the \textit{Euclidian closure} of $\HS$, denoted $\overline{\HS} := \R \cup \{\infty\}$. For $z_0 \in \HS$, the set of all possible \textit{limit points} of $\Gamma$-orbits $\Gamma \cdot z_0$, is called the \textit{limit set} of $\Gamma$ and is denoted by $\Lambda (\Gamma)$. It can be shown that $\Lambda(\Gamma)$ does not depend on the point $z_0$. If $\Gamma$ is discrete then $\Lambda (\Gamma)$ is necessarily a subset of $\overline{\HS}$~\cite{Beardon}.

\begin{definition}
A discrete subgroup $\Gamma$ of $\PSL$ is called \textit{elementary} if it has a finite orbit in $\Lambda (\Gamma)$.  Otherwise, $\Gamma$ is called \textit{non-elementary.} 
\end{definition}
A non-elementary subgroup $\Gamma$ of $\PSL$ must have one hyperbolic element. In fact, any such $\Gamma$ has infinitely many such elements, no two of which have a common fixed point. Many algebraic, geometric, and dynamic consequences can be derived if a group is non-elementary~\cite{Beardon}~\cite{Katok}. 

\begin{definition}
Discrete non-elementary subgroups of $\PSL$ are called Fuchsian. Discrete non-elementary subgroups of $\PSLC$ are called Kleinian. 
\end{definition}

The following proposition is a necessary condition for discreteness and is a main part of the conceptual framework of $GM-$algorithm.  

\begin{prop}(J\o rgensen's inequality)~\cite{Jorgensen} If $\langle A,B \rangle$ is a non-elementary Kleinian group then:
\[
|\Tr(A)^2 - 4| + |\Tr([A,B]) - 2| \ge 1.
\]
\end{prop}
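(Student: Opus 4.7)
The plan is to argue by contradiction, supposing $\Gamma = \langle A, B\rangle$ is non-elementary Kleinian and $|\Tr(A)^2 - 4| + |\Tr([A,B]) - 2| < 1$, then producing either a non-trivial accumulation of elements of $\Gamma$ (violating discreteness) or a finite $\Gamma$-invariant set in $\Lambda(\Gamma)$ (violating non-elementarity). First I normalize $A$ by an inner automorphism of $\PSLC$: if $A$ is loxodromic or elliptic, take $A = \operatorname{diag}(\lambda, \lambda^{-1})$ with $\lambda \ne \pm 1$; if $A$ is parabolic, take $A$ to be the unit upper-triangular Jordan block fixing $\infty$. Writing the entries of $B$ as $a,b,c,d$ in the diagonal case, a direct computation yields $\Tr(A)^2 - 4 = (\lambda - \lambda^{-1})^2$ and $\Tr([A,B]) - 2 = -bc(\lambda - \lambda^{-1})^2$, so the standing hypothesis rewrites as $|\mu|(1 + |bc|) < 1$ with $\mu := (\lambda - \lambda^{-1})^2$. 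An analogous trace calculation handles the parabolic normalization.

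Next I iterate. Set $B_0 := B$ and inductively define $B_{n+1} := B_n A B_n^{-1} \in \Gamma$, so that every $B_n$ is a conjugate of $A$ in $\Gamma$ and in particular has $\Tr(B_n) = \Tr(A)$. A direct matrix multiplication produces the recursion $b_{n+1} c_{n+1} = -(1 + b_n c_n)(b_n c_n)\mu$, giving the contraction $|b_{n+1} c_{n+1}| \le |\mu|\bigl(1 + |b_n c_n|\bigr) |b_n c_n|$. Combined with $|\mu|(1 + |bc|) < 1$, an induction shows $|b_n c_n| \to 0$ geometrically fast, and using $\det B_n = 1$ together with the fixed trace one extracts convergence of the remaining entries. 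Thus $B_n \to B_\infty$, with $B_\infty$ diagonal, hence commuting with $A$.

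The punchline now forks. If the sequence $\{B_n\}$ does not stabilize, then it is an infinite sequence of distinct elements of $\Gamma$ converging in $\PSLC$, contradicting discreteness. Otherwise some $B_n$ coincides with $B_{n+1} = B_n A B_n^{-1}$, i.e.\ commutes with $A$; in the diagonal normalization this forces $B_n$ itself to be diagonal, and running the recursion backwards shows $B_0 = B$ already preserves the fixed-point set $\{0, \infty\}$ of $A$ setwise. But then $\Gamma$ leaves a two-point subset of $\Lambda(\Gamma)$ invariant and is therefore elementary, contradicting the hypothesis. The parabolic normalization follows the same skeleton, with the fixed-point pair replaced by the single fixed point $\{\infty\}$.

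The main obstacle is the parabolic case. When $\Tr(A)^2 = 4$ the quantity $\mu$ vanishes and the diagonal-case contraction is vacuous; one must instead run the iteration with $A$ unit upper triangular and extract the decay directly from the lower-left entry, obtaining an estimate of the shape $|c_{n+1}| \le |c_n|^2$ that still forces $B_n$ to converge to a matrix fixing $\infty$. Verifying that in both regimes the iterates remain genuinely distinct---so that the alleged nondiscrete accumulation truly occurs---is the subtlest bookkeeping of the argument, and it is here that the non-elementarity hypothesis is used in full: without it, $B$ could be chosen at the outset to share a fixed point with $A$, causing the iteration to collapse to $A$ and invalidating the contradiction.
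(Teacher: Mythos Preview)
The paper does not supply a proof of this proposition; it is stated with a citation to J\o rgensen's original paper and used as a black box (the ``necessary condition'' half of the Poincar\'e/J\o rgensen dichotomy driving the $GM$- and $4PS$-algorithms). So there is no in-paper argument to compare against.

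That said, your proposal is the classical proof, essentially the one J\o rgensen himself gave: normalize $A$, iterate $B_{n+1}=B_nAB_n^{-1}$, derive the contraction $b_{n+1}c_{n+1}=-(1+b_nc_n)b_nc_n\mu$, and force either an accumulation point in $\Gamma$ or an invariant finite set on the sphere. The computations you record are correct. Two points deserve tightening if you want the argument to be complete rather than a sketch. First, from $b_nc_n\to 0$ and fixed trace you do not immediately get convergence of $B_n$; you need to control the individual entries $a_n,d_n,b_n,c_n$, typically by observing that the diagonal entries are determined by $b_nc_n$ and $\Tr A$ up to a sign and then handling $b_n,c_n$ separately (one of them may diverge while the other goes to zero; the standard fix is to pass to $A^{-1}B_nA$ or use a further normalization). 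Second, the ``running the recursion backwards'' step in the stabilizing case requires care: from $b_nc_n=0$ you must argue inductively that $b_0c_0=0$, which is immediate from the recursion, and then that $B$ actually fixes one of $0,\infty$ rather than merely permuting them, which uses non-elementarity once more. None of this is a gap in the strategy---it is exactly how the standard proof goes---but as written the proposal is a correct outline rather than a finished proof.
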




\subsection{Dynamics of Fuchsian Groups}

The \textit{covering theory} implicit in the examples in Section~\ref{sec:constructions} leads to a fundamental dynamical question: When and where does a group act \textit{properly discontinuously}? 

\begin{definition}\label{def:properdiscontinuously}
A group $\Gamma$ acts \textit{properly discontinuously} on a topological space $X$ if for every compact set $K \subseteq X$, $\{ \gamma \in \Gamma | \ \ \gamma \cdot K \cap K \ne \emptyset \}$ is finite.
\end{definition}

\begin{definition}For $\Gamma$ and $X$ as in Definition~\ref{def:properdiscontinuously}, a region $\Omega \subseteq X$ where $\Gamma$ acts properly discontinuously is called a \textit{domain of discontinuity}, denoted by $\Omega(\Gamma)$.
\end{definition}

Let $M$ be a topological manifold and $\Gamma$ be a group, $\Gamma$ is said to act on $M$ by \textit{orientation preserving homeomorphism} if and only if there exists a \textit{non-trivial} homomorphism $\rho : \Gamma \rightarrow \Homeo^+(M)$. Via the \textit{compact-open} topology, $\Homeo^+(M)$ is a topological group~\cite{ChenANDTo}. There is a natural bijective correspondence between actions of $\Gamma$ on $M$ and representations of $\Gamma$ into $\Sym(X)$~\cite{Shalen}. An action is said to be \textit{effective} if it corresponds to a \textit{faithful} representation~\cite{Shalen}. Along with a \textit{fixed-point} theorem which describes canonical neighborhoods of \textit{torsion} points, properly discontinuous actions give the following desired quotient construction.
\begin{prop}
If $X$ is a Hausdorff, locally compact topological space and if $\Gamma$ acts on $X$ by homeomorphisms and $\Gamma$ acts properly discontinuously and freely on $X$, then the quotient space $\Gamma \backslash X$ is a Hausdorff topological surface via the quotient topology~\cite{ChenANDTo}.
\end{prop}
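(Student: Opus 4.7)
The plan is to verify two properties of the quotient: first that $\Gamma \backslash X$ is Hausdorff in the quotient topology, and second that (when $X$ is itself a surface, as in the intended applications to $X=\HS$) the quotient inherits the local Euclidean structure. Throughout, let $\pi: X \to \Gamma \backslash X$ denote the quotient map. I would first record the standard fact that $\pi$ is an open map: if $U \subseteq X$ is open, then $\pi^{-1}(\pi(U)) = \bigcup_{\gamma \in \Gamma} \gamma \cdot U$ is a union of homeomorphic images of $U$, hence open, and openness in the quotient topology is by definition tested on preimages.

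For the Hausdorff property, fix distinct orbits $\Gamma \cdot x \ne \Gamma \cdot y$. Using local compactness, choose compact neighborhoods $K_x \ni x$ and $K_y \ni y$. By proper discontinuity applied to the compact set $K = K_x \cup K_y$, the set $F = \{\gamma \in \Gamma : \gamma \cdot K \cap K \ne \emptyset\}$ is finite. For every $\gamma \in F$, the points $\gamma \cdot x$ and $y$ are distinct (they lie in different orbits), so Hausdorffness of $X$ produces disjoint open neighborhoods $W_\gamma \ni \gamma \cdot x$ and $V_\gamma \ni y$. Setting
\[
U \;=\; \mathrm{int}(K_x) \cap \bigcap_{\gamma \in F} \gamma^{-1} \cdot W_\gamma, \qquad V \;=\; \mathrm{int}(K_y) \cap \bigcap_{\gamma \in F} V_\gamma,
\]
one verifies that $\gamma \cdot U \cap V = \emptyset$ for all $\gamma \in \Gamma$: for $\gamma \notin F$ this holds because $\gamma \cdot U \subseteq \gamma \cdot K_x$ is disjoint from $K_y \supseteq V$, while for $\gamma \in F$ the factor $\gamma \cdot (\gamma^{-1} W_\gamma) = W_\gamma$ is disjoint from $V_\gamma \supseteq V$. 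Hence $\pi(U)$ and $\pi(V)$ are disjoint open neighborhoods separating the two orbits.

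For the surface structure (assuming $X$ is a topological $2$-manifold), the freeness of the action enters in the same style. For each $x \in X$, choose a compact neighborhood $K_x$ and apply proper discontinuity to get a finite set $F' = \{\gamma : \gamma \cdot K_x \cap K_x \ne \emptyset\}$. Since the action is free, $\gamma \cdot x \ne x$ for every $\gamma \in F' \setminus \{\Id\}$, so shrinking $K_x$ by intersecting with Hausdorff separators (exactly as above, but now with $x = y$) produces an open neighborhood $U_x \ni x$ such that $\gamma \cdot U_x \cap U_x = \emptyset$ for all $\gamma \ne \Id$. Then $\pi|_{U_x}$ is a continuous open bijection onto $\pi(U_x)$, hence a homeomorphism, and composing with a Euclidean chart of $X$ at $x$ yields a chart on the quotient.

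The main obstacle is the Hausdorff step, because one must control \emph{every} group element while having proper discontinuity only on compacta; the trick is to contain both orbit representatives inside a single compact set $K$, so that a single application of proper discontinuity reduces infinitely many separation problems to finitely many, each of which is solved by Hausdorffness of $X$. Local compactness is essential here: without it one could not guarantee that $U$ and $V$ lie inside a compact set to which proper discontinuity can be applied.
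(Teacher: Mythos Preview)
Your argument is correct and is the standard one. Note, however, that the paper does not actually give a proof of this proposition: it is stated with a citation to~\cite{ChenANDTo} and no argument, so there is nothing in the paper to compare your approach against. Your proof supplies exactly what the citation points to.

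One remark worth making explicit: as written, the proposition's hypothesis only asks that $X$ be Hausdorff and locally compact, yet the conclusion says the quotient is a topological \emph{surface}. You correctly observed this gap and added the assumption that $X$ is itself a $2$-manifold (as in the intended application $X=\HS$) before producing local charts on the quotient. That is the right fix; the proposition as literally stated is slightly misworded.
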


We may characterize discreteness with the following equivalencies: a finitely generated subgroup $\Gamma$ of  $\PSL$ acts properly discontinuously on $\HS$ if and only if $\Gamma$ is a discrete subgroup of $\PSL$ if and only if there exists a finite-sided convex fundamental domain $\triangle$ for the $\Gamma$ action on $\HS$~\cite{Beardon}~\cite{Katok}. 

\begin{definition} Given a topological manifold $M$ and a group $\Gamma$ acting on $M$ properly discontinuously, a subset $\Omega \subseteq M$ is called a \textit{fundamental domain} if and only if each of the following hold:
\begin{enumerate}
\item $\overline{\Omega} \longrightarrow \Gamma \backslash M$ is surjective. I.e. every $\Gamma$-orbit meets $\overline{\Omega}$,
\item $\Int(\Omega) \longrightarrow \Gamma \backslash M$ is injective.  I.e. $\gamma \cdot \Int(\Omega)$ are disjoint for each $\gamma \in \Gamma$,
\item $ \partial \Omega$, the boundary of $\Omega$, defined by $ \partial \Omega := \overline{\Omega} \backslash \Int(\Omega)$, is ``small." I.e. for any $\Gamma$-\textit{invariant measure} on $\Omega$, the measure of the boundary is zero,
\item (Local Finiteness) $\forall \gamma \in \Gamma, \forall$ compact subsets $K \subset X$, $M$ meets only finitely many translates of $\gamma \cdot \overline{\Omega}$. 
\end{enumerate}
\end{definition}

\begin{definition}\label{Dirichlet} For any $z_0 \in \HS$ only fixed by $\Id \in \Gamma$, $D_{\Gamma}(z_0)$ the \textit{Dirichlet fundamental domain} is:  $D_{\Gamma}(z_0)  = \{ z \in \HS \ | \ \ d(z,z_0) \le d(z, \gamma \cdot z_0), \ \forall \gamma \ne \Id \in \Gamma\}.$
\end{definition}

\begin{remark} We now emphasize our focus on the paradigm of locally homogenous structures.  Propositions~\ref{prop:limitsetP1} and~\ref{prop:limitsetP1} demonstrate how it is possible to parametrize the limit set $\Lambda (\Gamma)$ of a hyperbolic structure with the boundary of the hyperbolic plane. Example~\ref{ex:QF} demonstrates how to further employ the limit set $\Lambda (\Gamma)$ to parametrize deformations of hyperbolic structures~\cite{Goldman4}~\cite{Labourie1}~\cite{Thurston1}.  [These should be understood as motivation for the manner in which a Riemann surface $S$ which admits a hyperbolic metric may be advantageously interpreted as a hyperbolic $(G,X)-$structure on $\Sigma$. Recall how $G$ and $X$ are described proceeding Example~\ref{ex:Fuchsian} in Section~\ref{sec:intro}; these hyperbolic geometric structures are characterized by two pieces of data: $\rho : \pi_1\Sigma \longrightarrow \PSL$ and $\Dev: \widetilde{\Sigma} \longrightarrow \mathbb{H}$.]
\end{remark}

\begin{prop}\label{prop:limitsetP1}
Given a representation $\rho : \pi_1\Sigma_g \longrightarrow \PSL$ such that there exists a mapping $\xi: \partial_{\infty}(\pi_1\Sigma_g) \longrightarrow \partial_{\infty}\HS$ continuous, injective, and $\rho-$\textit{equivariant} (I.e. $\xi$ \textit{intertwines the action} of $\pi_1S$ with action of $\rho$ on $\partial_{\infty}\HS$), then:
\begin{enumerate}
\item $\rho$ is injective, 
\item $\rho$ is discrete, 
\item $\rho(\pi_1\Sigma)$ is \textit{cocompact}, that is $\rho(\pi_1\Sigma)$ has compact quotient. 
\end{enumerate}
\end{prop}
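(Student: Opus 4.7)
The plan is to use that for $g\ge 2$ the Gromov boundary $\partial_{\infty}(\pi_1\Sigma_g)$ is a topological circle on which each non-trivial $\gamma\in\pi_1\Sigma_g$ acts with north--south dynamics (a unique attracting and a unique repelling fixed point), and to transfer this dynamical picture along $\xi$ to the action of $\rho(\pi_1\Sigma_g)$ on $\partial_{\infty}\HS$. All three conclusions then follow by applying this translation principle in order; the map $\xi$ is the single tool that moves dynamical information back and forth between the two circles.

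For (1), if $\gamma\in\ker\rho$ then for every $x\in\partial_{\infty}(\pi_1\Sigma_g)$ equivariance gives $\xi(\gamma\cdot x)=\rho(\gamma)\cdot\xi(x)=\xi(x)$, and injectivity of $\xi$ forces $\gamma\cdot x=x$. Thus $\gamma$ would fix the entire boundary pointwise, contradicting north--south dynamics unless $\gamma$ is trivial. For (2) I would argue by contradiction: suppose $\gamma_n\in\pi_1\Sigma_g$ are distinct elements with $\rho(\gamma_n)\to\Id$. By (1) each $\gamma_n$ is non-trivial with attracting/repelling fixed points $x_n^{\pm}\in\partial_{\infty}(\pi_1\Sigma_g)$, and by equivariance $\xi(x_n^{\pm})$ must be fixed by $\rho(\gamma_n)$. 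Since $\pi_1\Sigma_g$ acts on its Gromov boundary as a convergence group, after passing to a subsequence the $\gamma_n$ uniformly contract compact subsets of $\partial_{\infty}(\pi_1\Sigma_g)\setminus\{x^-\}$ onto $x^+$; by continuity of $\xi$, the isometries $\rho(\gamma_n)$ must then uniformly contract the $\xi$-images of such sets onto $\xi(x^+)$, which is incompatible with $\rho(\gamma_n)\to\Id$. Making this convergence-group transfer precise is the main obstacle; the cleanest route is to observe that the hypotheses on $\xi$ exhibit the action of $\rho(\pi_1\Sigma_g)$ on $\partial_{\infty}\HS$ as a convergence action, which is automatically discrete.

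For (3), I would first note that every non-trivial $\rho(\gamma)$ is hyperbolic: by (1) and the same boundary transfer, $\rho(\gamma)$ has two distinct fixed points $\xi(x^{\pm})\in\partial_{\infty}\HS$, ruling out parabolic and elliptic behavior. Next, $\xi$ is a continuous injection $S^1\to S^1$; invariance of domain makes it an open map, so its image is both open and closed, hence (by connectedness of $S^1$) equal to $\partial_{\infty}\HS$. Thus $\xi$ is a homeomorphism conjugating the two boundary actions, and minimality of the $\pi_1\Sigma_g$-action on its own boundary transports to give $\Lambda(\rho(\pi_1\Sigma_g))=\partial_{\infty}\HS$. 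Therefore $\rho(\pi_1\Sigma_g)$ is a finitely generated non-elementary Fuchsian group of the first kind with no parabolic elements; by Siegel's theorem it has finite covolume, and the absence of parabolics forces the quotient $\rho(\pi_1\Sigma_g)\backslash\HS$ to be closed, yielding cocompactness.
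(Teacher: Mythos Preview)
The paper does not actually prove this proposition; it is stated without proof in the expository Section~2 as motivation for the $(G,X)$-structure viewpoint, with the surrounding discussion pointing to~\cite{Labourie1} and~\cite{Goldman4}. There is therefore no paper argument to compare your approach against.

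On its own merits your outline is sound and follows the standard route. Part~(1) is complete as written. In part~(2) you correctly flag the soft spot yourself: the cleanest way to close it is to first establish (as you do in~(3)) that $\xi$ is a homeomorphism of circles; then $\rho(\gamma_n)\to\Id$ in $\PSL$ gives $\rho(\gamma_n)\to\Id$ uniformly on $\partial_\infty\HS$, hence via $\xi^{-1}$ the $\gamma_n$ converge to the identity uniformly on $\partial_\infty(\pi_1\Sigma_g)$, contradicting the convergence-group property for infinitely many distinct elements. Part~(3) is correct; the invariance-of-domain step is exactly right, and your observation that two distinct boundary fixed points rule out both parabolic and elliptic types is the key point. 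One caution: the version of Siegel's theorem quoted in the paper (``finitely generated $\Leftrightarrow$ finite area'') is imprecise as stated; what you actually need, and what is true, is that a finitely generated Fuchsian group \emph{of the first kind} has finite covolume, and finite covolume together with the absence of parabolics forces the quotient to be closed.
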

As we have defined $\Gamma$ in Theorem~\ref{thm:MAIN1}, $\Sigma_{g,n} \simeq \Gamma \backslash \HS$ does not have boundary. Yet, it is worthwhile (regarding \textit{compactifications}) to consider the case of Riemann surfaces $\Sigma^r_{g,n}$, genus $g$, $n$ punctures and $r$ boundary components, that is $r$ \textit{ends} that are \textit{collar neighborhoods} of closed geodesic boundary elements. 
\begin{prop}\label{prop:limitsetP2}
Given a representation $\rho: \pi_1\Sigma^r_{g,n} \longrightarrow \PSL$ such that there exists a mapping  $\xi : \partial_{\infty}(\pi_1\Sigma^r_{g,n})  \longrightarrow \partial_{\infty} \HS$ continuous, injective and 
$\rho$-equivariant, then: 
\begin{enumerate}
\item $\rho$ is injective,
\item $\rho$ is discrete,
\item $\rho(\gamma)$ is hyperbolic,  $\forall \gamma \ne  \Id \in \Gamma$.
\end{enumerate}
\end{prop}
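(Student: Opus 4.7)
The plan is to derive all three conclusions from $\rho$-equivariance together with the dynamical structure of the action of $\pi_1 \Sigma^r_{g,n}$ on its Gromov boundary. Because $\Sigma^r_{g,n}$ has at least one puncture or boundary component, $\pi_1\Sigma^r_{g,n}$ is a non-abelian free group (Example~\ref{exam:fundamentalgroup}), hence word hyperbolic, so $\partial_\infty(\pi_1\Sigma^r_{g,n})$ is a Cantor set and every non-trivial $\gamma$ acts on it with ``North--South'' dynamics, fixing exactly two boundary points $\gamma^{+}, \gamma^{-}$. I will use these facts throughout.

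For injectivity (1), I would suppose $\rho(\gamma)=\Id$ with $\gamma\neq\Id$. Then $\rho$-equivariance gives $\xi(\gamma\cdot x)=\xi(x)$ for every $x\in\partial_\infty(\pi_1\Sigma^r_{g,n})$, and injectivity of $\xi$ forces $\gamma$ to fix every boundary point, contradicting North--South dynamics. For hyperbolicity (3), for any $\gamma\neq\Id$ the points $\gamma^{+},\gamma^{-}$ are distinct, so by injectivity of $\xi$ the images $\xi(\gamma^{\pm})\in\partial_\infty\HS$ are two distinct points, both fixed by $\rho(\gamma)$ via equivariance. The trace/fixed-point classification of elements of $\PSL$ recalled in Section~2.2 (an element of $\PSL$ with two distinct fixed points on $\partial_\infty\HS$ has $|\Tr|>2$) then forces $\rho(\gamma)$ to be hyperbolic.

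The main obstacle is discreteness (2), which I would attack topologically rather than algebraically. Let $H := \overline{\rho(\pi_1\Sigma^r_{g,n})}\leq\PSL$ and let $H^0$ be its identity component; set $K := \xi(\partial_\infty(\pi_1\Sigma^r_{g,n}))\subset \partial_\infty\HS$. Continuity of $\xi$ together with compactness of the Gromov boundary makes $K$ a closed, totally disconnected subset of $\partial_\infty\HS$, and equivariance shows that $\rho(\pi_1\Sigma^r_{g,n})$, and hence its closure $H$, preserves $K$. Since $H^0$ is connected and $K$ is totally disconnected, each $H^0$-orbit in $K$ must be a single point, i.e.\ $H^0$ fixes $K$ pointwise. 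But $K$ is uncountable (by injectivity of $\xi$ on the Cantor boundary), and no non-identity element of $\PSL$ fixes more than two points on $\partial_\infty\HS$; therefore $H^0=\{\Id\}$, $H$ is discrete, and so is $\rho(\pi_1\Sigma^r_{g,n})$.

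Steps (1) and (3) are essentially formal bookkeeping with equivariance and the convergence dynamics on the Gromov boundary; the only genuinely non-trivial input is in (2), where one must verify that $K$ is totally disconnected (requiring continuity of $\xi$ plus the Cantor structure of $\partial_\infty(\pi_1\Sigma^r_{g,n})$) and then rule out positive-dimensional stabilizers in $\PSL$ of an uncountable subset of $\partial_\infty\HS$. This is the same structural obstacle that appears in Proposition~\ref{prop:limitsetP1}, adapted here to the free-group/open-surface setting.
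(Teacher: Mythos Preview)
The paper states this proposition as background and does not supply a proof, so there is nothing in the paper to compare against directly. Your argument is correct and is the standard approach: parts (1) and (3) are immediate from equivariance and injectivity of $\xi$ combined with North--South dynamics on the Gromov boundary of a non-abelian free group; for (2), the closed-subgroup trick---showing that the identity component of $\overline{\rho(\pi_1)}$ must fix the totally disconnected image $K=\xi(\partial_\infty\pi_1)$ pointwise, hence be trivial---is the clean way to finish.

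One small caveat worth tightening: having a puncture or boundary component makes $\pi_1\Sigma^r_{g,n}$ free, but not automatically \emph{non-abelian}; you are implicitly also assuming $\chi(\Sigma^r_{g,n})<0$ so that the rank is at least~$2$ and the Gromov boundary is genuinely a Cantor set. In the degenerate rank-one case ($\pi_1\cong\Z$, boundary two points) your argument for (2) as written does not go through, since the pointwise stabilizer in $\PSL$ of two points of $\partial_\infty\HS$ is a full one-parameter subgroup; the conclusion still holds there, but for the elementary reason that a cyclic group generated by a single hyperbolic element is discrete.
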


It is a fact that $\Dev$ is $\rho-$equivariant and we will see $\Dev$ is an isometry which extends to the boundary of the hyperbolic plane. Therefore $\Dev$ can be identified with $\xi$ in Propositions~\ref{prop:limitsetP1},~\ref{prop:limitsetP1}.  Furthermore, the limit set $\Lambda_{\Gamma}$ is the entire boundary of the hyperbolic plane, $\partial_{\infty}\HS$. [For surfaces with boundary, like $\Sigma^r_{g,n}$ in Proposition~\ref{prop:limitsetP2},  $\Lambda_{\Gamma}$ will be a \textit{Cantor set} contained in $\partial_{\infty}\HS$. Indeed,  $\partial_{\infty} \pi_1(\Sigma^r_{g,n}$)  is \textit{homotopy equivalent} to the \textit{Cayley graph} of $\pi_1(\Sigma^r_{g,n})]$. In both cases, $\Dev$ ``encodes" the entire representation $\rho$. For example the \textit{Hausdorff dimension} of  $\Lambda_{\Gamma}$ gives a notion of ``geometric magnitude" of a surface, and this is one sense in which a representation $\rho$ is encoded by the \textit{boundary map} $\Dev$~\cite{Labourie1}~\cite{Goldman4}. 

\begin{example}\label{ex:QF} (\textit{Quasi-Fuchsian} Structures) R. Riley developed one of the first computer programs for detecting discreteness~\cite{Riley}. Riley's procedure detected \textit{quasi-Fuchsian} structures $\mathcal{Q}\mathcal{F}(\Sigma)$. Given a Fuchsian structure on $\Sigma$ (a discrete and faithful representation $\rho : \pi_1 \longrightarrow \PSL$) a quasi-Fuchsian structure can be constructed by ``wiggling" the representation $\rho$ a ``bounded" amount so that the image of the limit set $\rho(\Lambda_{\Gamma})$ is deformed from $\RPS$ (Fuchsian) to a \textit{Jordan curve} (quasi-Fuchsian). [The precise way to affect this type of \textit{quasi-conformal} deformation is described via \textit{Beltrami differentials}~\cite{Marden1}.] It is a fact that $\Gamma \simeq \pi_1$ acts properly discontinuously and cocompactly on $\Omega(\Gamma_{\rho}) := \CPS \backslash \{ \xi(\partial_{\infty} \pi_1)\}$. Also $\xi: \partial_{\infty} \pi_1 \longrightarrow \partial_{\infty} \HS$, so that the domain of discontinuity $\Omega(\Gamma_{\rho})$ is homeomorphic to $\HS \cup \HS^{-}$, where $\HS^{-}$ denotes the \textit{Lower Half Plane}. This construction gives a uniformization of a quasi-Fuchsian manifold $M_{\rho} = \Gamma \backslash (\HS^3 \cup \Omega(\Gamma_{\rho}))$~\cite{Maskit2}~\cite{Marden1}.  
\end{example}
\begin{theorem}\label{thm:Riley}[Riley] Discreteness is \textit{decidable} for non-elementary finitely generated subgroups of $\SL$~\cite{Riley}~\cite{Kapovich2}. (See Section~\ref{sec:decidability} for relevant definitions of decidability models and halting sets.)
\end{theorem}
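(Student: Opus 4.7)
The plan is to produce a decision algorithm by running two complementary semi-algorithms in parallel. First I would fix a computable input model: a finite generating set $S = \{A_1, \ldots, A_k\} \subset \SL$ given by matrices with entries in a computable subfield of $\R$, so that matrix multiplication, trace comparison, and numerical estimates of distances in $\HS$ are computable to any requested precision. The architecture is standard in recursion theory: if non-discreteness and discreteness are each semi-decidable, then interleaving the two semi-algorithms yields a decision procedure, since exactly one of the two cases holds for every input.

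The first semi-algorithm, $P_{\neg D}$, semi-decides non-discreteness using J\o rgensen's inequality together with the characterization of a discrete subgroup by the absence of non-identity elements arbitrarily close to $\Id$. At stage $n$ I would enumerate all words $w$ of length at most $n$ in $S \cup S^{-1}$, compute $\rho(w)$, and test two conditions: whether some non-identity $\rho(w)$ lies strictly inside a shrinking Margulis neighborhood of $\Id$, or whether some pair $(A,B) = (\rho(w_1), \rho(w_2))$ violates
\[
|\Tr(A)^2 - 4| + |\Tr([A,B]) - 2| \ge 1.
\]
If $\Gamma$ is non-elementary and non-discrete then, since the orbit of $\Id$ under conjugation in $\Gamma$ accumulates at $\Id$, a witness of one of the two types must appear after finitely many enumerations, and $P_{\neg D}$ halts.

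The second semi-algorithm, $P_{D}$, semi-decides discreteness by attempting to construct a Dirichlet fundamental domain $D_{\Gamma}(z_0)$ in the sense of Definition~\ref{Dirichlet}, for a fixed basepoint $z_0 \in \HS$ chosen so that none of the enumerated words fix $z_0$. At stage $n$ I would form the finite intersection of half-planes
\[
D_n(z_0) = \bigcap_{\ell(w) \le n} \{\, z \in \HS \ |\  d_{\HS}(z,z_0) \le d_{\HS}(z, \rho(w)\cdot z_0)\,\},
\]
check whether the resulting side pairings close up combinatorially and whether the cycle conditions of the Poincar\'e polygon theorem are satisfied, and verify that the resulting presentation is consistent with the input generators. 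For $\Gamma$ finitely generated and Fuchsian, Siegel's theorem together with the Ahlfors finiteness theorem guarantees the true Dirichlet domain has finitely many sides, so some $D_n$ eventually coincides with $D_{\Gamma}(z_0)$ and $P_D$ halts.

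The main obstacle is certifying that each semi-algorithm terminates in finite time on the inputs it is supposed to accept. For $P_{\neg D}$ this demands an effective form of the Margulis lemma, so that the ``close to the identity'' threshold is computable a priori rather than merely approximated; for $P_D$ it requires an effective stabilization criterion detecting when $D_n(z_0)$ already equals the full Dirichlet domain, together with a Poincar\'e-theorem verification that uses only the finite combinatorial data produced so far. Pairing these effective finiteness statements with the quantitative consequences of J\o rgensen's inequality, Shimizu's lemma for parabolics, and the Margulis decomposition constitutes the technical heart of Riley's construction, and is what one must supply to convert the abstract two-sided semi-decision scheme into a bona fide decision algorithm in the sense of Section~\ref{sec:decidability}.
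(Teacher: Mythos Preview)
The paper does not supply its own proof of Theorem~\ref{thm:Riley}; it is stated as a cited result attributed to Riley (and to Kapovich for the decidability formulation), so there is no in-paper argument to compare against. Your two-semi-algorithm architecture is the standard and correct framework for such a result, and you have correctly located where the genuine work lies: making the Margulis-type threshold in $P_{\neg D}$ effective, and giving a computable stabilization criterion for $P_D$ via the Poincar\'e polygon theorem.

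Two small corrections are worth flagging. First, the finiteness input for $P_D$ is not quite what you cite: Siegel's theorem as recorded in the paper concerns finite-area quotients, and the Ahlfors finiteness theorem is a Kleinian result about the quotient of the domain of discontinuity; neither directly gives what you need. The relevant fact is that every finitely generated Fuchsian group is geometrically finite (a special feature of dimension two, contrasted in the paper with the Kleinian case), and hence has a finite-sided Dirichlet polygon; that is the statement guaranteeing $D_n(z_0)$ eventually stabilizes. Second, in $P_{\neg D}$ you do not actually need an effective Margulis constant: since a non-discrete $\Gamma$ has non-identity elements arbitrarily close to $\Id$, simply enumerating words and shrinking the test radius in tandem already halts, provided your computable-field model lets you decide $\rho(w)\neq\Id$ exactly. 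J\o rgensen's inequality then serves as an accelerator rather than a necessity.

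With those adjustments your outline is sound; what remains, as you say, is the effective Poincar\'e verification and the check that the side-pairings of $D_n$ re-generate the input set, which is where Riley's computational work (and Kapovich's later formalization) supplies the missing details.
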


\subsection{Deformations of Hyperbolic Structures}
The \textit{deformation theory} of geometric structures is naturally involved in the discreteness problem; we have observed via example the correspondence between the classification of Riemann surfaces and discrete subgroups of the automorphism groups of $\CPS, \CS,$ or $\HS$.   A \textit{deformation space} of geometric structures is a space whose points themselves are geometric structures, whereby deformation spaces parametrize geometric structures admissible on a given topological manifold~\cite{Goldman4}. The \textit{Deformation theory} of geometric structures is quite vast. Two sources which emphasize the themes of algebra and analysis at the heart of complex function theory of Riemann surfaces are~\cite{mumford}and~\cite{Bers3}. For a series of surveys see~\cite{Papadopoulos1}. Also see~\cite{Canary1}\newline~\cite{ChenANDTo}~\cite{FarbMargalit}\cite{Goldman1}\cite{Goldman2}~\cite{Goldman3}~\cite{Goldman4}~\cite{Goldman5}~\cite{Labourie1}~\cite{Marden1}~\cite{Thurston1}\newline~\cite{Wentworth}~\cite{Wolpert1}. There are many different constructions of deformation spaces of Riemann surfaces: Mumford's algebraic construction of $\mathcal{M}_{g}$ (the moduli space of stable algebraic curves), Teichm{\"u}ller's complex-analytic construction of $\mathcal{T}(S_g)$ (the space of  conformal structures on $S_g$), Hitchin's complex gauge-theoretic construction of $\mathfrak{M}_{n,g}$ (the moduli space of rank$-n$ stable Higgs bundles on $S_g$.)  We will briefly describe Fenchel-Nielsen's real-analytic construction of the \textit{Fricke space} $\mathcal{F}(\Sigma_{g})$, the space of \textit{complete marked hyperbolic structures} on $\Sigma_{g}$. This construction emphasizes uniformization, and is based on work of Fricke-Klein which develops the deformation theory of hyperbolic structures on a surface in terms of the space of representations of its fundamental group $\pi_1$ into $\SLC$~\cite{FrickeandKlein}~\cite{Goldman1}.   


\begin{definition} A \textit{marked} hyperbolic structure is a pair $(S_1, \phi_1 )$, where $S$ and $S_1$ are Riemann surfaces and $\phi_1 : S \longrightarrow S_1$ is a diffeomorphism such that if there exists an isometry $I: S_1 \longrightarrow S_2$, then  $\phi_{1} \simeq I \circ \phi_{2}$  up to \textit{homotopy}~\cite{FarbMargalit}. 
\end{definition}

A marking defines an equivalence relation on hyperbolic structures~\cite{FarbMargalit}. Also, since any two Riemann surfaces of constant curvature -1 are locally isometric and since via uniformization any local isometry from a connected subdomain of $\HS$ \textit{uniquely} extends to an isometry on all of $\HS$, a marked structure determines two pieces of data: a map which globalizes the coordinate charts of $S$,  $\Dev : \widetilde{S} \longrightarrow X$ and a representation which globalizes the coordinate changes of $S$, $\rho : \pi_1 \longrightarrow G$. $\Dev$  is called \textit{developing map},  $\rho$ is called \textit{holonomy representation} and $X$ is the homogeneous space of $G$. This is precisely the data of a $(G,X)$-structure on $\Sigma$~\cite{Goldman4}.
 
 \begin{definition} The pair $(\Dev, \rho)$ is called a \textit{developing pair} and determines a $(G,X)$-structure on $\Sigma$. 
 \end{definition}

\begin{definition} The $\SL$-\textit{representation variety}, denoted $\mathcal{R}(\pi_1)$, is given by:  $\mathcal{R}(\pi_1) = \Hom(\pi_1, \SL) / \SL$, where $\SL$ acts by conjugation. 
\end{definition}
It is possible to construct $\mathcal{D}_{(G ,X)}(\Sigma)$, the deformation space of all $(G,X)$-structures on $\Sigma$~\cite{Goldman4}.  The points of such deformation spaces are isomorphism classes of $(G,X)$-structures on $\Sigma$. When $G = \PSLC$ these deformation spaces are called \textit{Fricke space} $\mathcal{F}(\Sigma_{g})$. Furthermore, $\Mod$ \textit{the mapping class group} of $\Sigma$, whose elements maybe be described as \textit{isotopy} classes of elements of ${\Homeo}^{+}(\Sigma$) which fix the boundary point-wise (if non-empty) and preserve the set of punctures, acts on  $\mathcal{D}_{(G,X)}(\Sigma)$ by changing the marking on the underlying surface~\cite{FarbMargalit}.

\begin{theorem}(Fricke) Let $G$ be $\SLC$, $\Mod$ acts properly discontinuously but not necessarily freely on $\mathcal{F}(S_g)$. 
\end{theorem}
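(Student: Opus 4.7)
The plan is to exploit the fact that $\mathcal{F}(S_g)$ embeds in a product of length functions, which intertwines nicely with the action of $\Mod$. Specifically, for each free-homotopy class $[\alpha] \in \pi_1(S_g)/\text{conj}$, define $\ell_\alpha : \mathcal{F}(S_g) \longrightarrow \mathbb{R}_{>0}$ by $\ell_\alpha(\rho) = \inf_{z \in \HS} d_{\HS}(z, \rho(\alpha) \cdot z)$, i.e.\ the translation length of the hyperbolic isometry representing $[\alpha]$. Since $\rho$ is discrete and faithful, each $\ell_\alpha$ is well-defined, continuous, and the collection $\{\ell_\alpha\}_{[\alpha]}$ separates points of $\mathcal{F}(S_g)$ (in fact finitely many suffice to give a proper embedding into $\mathbb{R}^N$). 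The $\Mod$-action is then transparent: for $\phi \in \Mod$ with lift $\phi_* \in \Aut(\pi_1)$, we have $\ell_\alpha(\phi \cdot \rho) = \ell_{\phi_*^{-1}(\alpha)}(\rho)$, so $\Mod$ merely permutes the labels of a known collection of continuous functions.

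Next I would prove proper discontinuity. Let $K \subset \mathcal{F}(S_g)$ be compact and fix a finite filling collection of curves $\alpha_1, \ldots, \alpha_m$ so that $\rho \mapsto \bigl(\ell_{\alpha_1}(\rho), \ldots, \ell_{\alpha_m}(\rho)\bigr)$ is proper on $\mathcal{F}(S_g)$; by continuity there is $L > 0$ with $\ell_{\alpha_i}(\rho) \leq L$ for every $\rho \in K$ and every $i$. Suppose $\phi \cdot K \cap K \neq \emptyset$, witnessed by $\rho \in K$ with $\phi \cdot \rho \in K$. Then
\begin{equation*}
\ell_{\phi_*^{-1}(\alpha_i)}(\rho) = \ell_{\alpha_i}(\phi \cdot \rho) \leq L \quad \text{for } i = 1, \ldots, m.
\end{equation*}
The crucial input is the hyperbolic-geometric fact (a Mumford-type lemma, provable via the Margulis lemma and the thick-thin decomposition of $\rho \backslash \HS$) that on any compact $K$ only finitely many free-homotopy classes $[\beta]$ satisfy $\ell_\beta \leq L$ somewhere on $K$. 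Hence each $\phi_*^{-1}(\alpha_i)$ lies in a finite set of conjugacy classes; since the $\alpha_i$ fill, the outer automorphism $\phi_*$ is determined by its effect on them up to a finite indeterminacy, so only finitely many $\phi \in \Mod$ can translate $K$ into itself.

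For the failure of freeness, I would exhibit explicit fixed points: any marked hyperbolic surface $(S_1, \phi_1)$ whose underlying Riemann surface admits a nontrivial holomorphic automorphism $\sigma$ yields a mapping class $[\sigma] \in \Mod$ fixing the corresponding point of $\mathcal{F}(S_g)$. Concrete examples such as the hyperelliptic involution for $g = 2$, or more general surfaces with large automorphism groups (Hurwitz surfaces), confirm that $\Stab_{\Mod}(\rho)$ can be nontrivial. In fact one can identify $\Stab_{\Mod}(\rho)$ with $\Aut(\rho \backslash \HS)$ as a Riemann surface, which is finite but sometimes nonzero.

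The main obstacle, as I see it, is the finiteness statement for short curves: unlike the topological count of simple closed curves, one needs the geometric fact that on a compact family of hyperbolic surfaces the set of free-homotopy classes with translation length below a threshold is finite. This requires genuine input from hyperbolic geometry (the Margulis lemma, collar lemma, or a direct Bers-type bound), and it is really the only step where compactness of $K$ is used in an essential way; the rest of the argument is formal manipulation with the length-function embedding and equivariance.
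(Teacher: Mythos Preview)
The paper does not actually prove this theorem. It is stated as a classical result attributed to Fricke, sandwiched between background material on marked hyperbolic structures and the holonomy map, with no argument supplied. So there is no ``paper's own proof'' against which to compare your proposal.

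That said, your sketch is the standard argument (essentially the one in Farb--Margalit, which the paper cites as \cite{FarbMargalit}): embed $\mathcal{F}(S_g)$ properly into a finite product of length functions, use $\Mod$-equivariance to reduce proper discontinuity to a finiteness statement about short curves, and invoke the collar/Margulis lemma for that finiteness. Your identification of the ``main obstacle'' is accurate --- the geometric input that only finitely many free-homotopy classes have length below a fixed bound on a compact family is where the real work lies, and everything else is bookkeeping. The non-freeness via hyperelliptic or Hurwitz surfaces is also the standard example. One small remark: you should be slightly more careful in the step ``$\phi_*$ is determined by its effect on the $\alpha_i$ up to finite indeterminacy'' --- the cleanest version uses that a filling collection with bounded lengths forces the entire marked length spectrum to lie in a compact set, hence $\phi\cdot K$ lies in a fixed compact set, and then one invokes that the map $\rho\mapsto(\ell_{\alpha_i}(\rho))$ is finite-to-one on its image. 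But this is a polish, not a gap.
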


The holonomy representation defines a mapping from the deformation space of $(G,X)$-structures to the G-\textit{representation variety}: 
  \begin{equation}\label{eq:holonomy}
  \rho : \mathcal{D}_{(G ,X)}(\Sigma) \longrightarrow \Hom(\pi_1,G)/G.
  \end{equation}

It can be shown that $\rho$ is $\Mod$-equivariant~\cite{Goldman4}~\cite{Labourie1}~\cite{Thurston1}.

\begin{theorem}(Thurston) In Equation~\ref{eq:holonomy}, $\rho$ is a local homeomorphism. Furthermore if $\Sigma$ be $\Sigma_g$, then $\rho$ is an embedding. (This is a theorem of A. Weil when $G = \PSL$, and is implicit in the work of C. Ehrasman)~\cite{Goldman4}.
 \end{theorem}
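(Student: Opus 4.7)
The plan is to exploit the local description of $(G,X)$-structures through developing pairs. Given a class $[(\Dev_0, \rho_0)] \in \mathcal{D}_{(G,X)}(\Sigma)$, I would first choose a finite open cover $\{U_i\}$ of $\Sigma$ by simply connected sets each contained in a coordinate chart of the $(G,X)$-atlas. Taking distinguished lifts $\widetilde{U}_i \subset \widetilde{\Sigma}$, $\Dev_0$ restricts to an embedding of each $\widetilde{U}_i$ into an open set of $X$ on which the $G$-action is tame (small elements of $G$ act without fixed points on the image). Fix a fundamental domain $F$ assembled from translates of the $\widetilde{U}_i$. This data is the platform for deforming both the developing map and the holonomy compatibly.

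To produce a local inverse to the holonomy map $\rho$, I would argue as follows. Given a representation $\rho'$ close to $\rho_0$ in $\Hom(\pi_1,G)$, construct a new developing map $\Dev'$ by perturbing $\Dev_0$ on each $\widetilde{U}_i \cap F$ by a small element of $G$, chosen so that the cocycle relation $\Dev'(\gamma \cdot x) = \rho'(\gamma)\cdot \Dev'(x)$ is consistent across the identifications on $\partial F$. Then extend $\Dev'$ to $\widetilde{\Sigma}$ by $\rho'$-equivariance. Continuity of the $G$-action and the cover being finite give continuous dependence of $\Dev'$ on $\rho'$, hence a continuous section of $\rho$ near $[(\Dev_0,\rho_0)]$. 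For local injectivity, if two nearby classes have the same image under $\rho$, one may conjugate so the representations literally coincide; then two $\rho'$-equivariant developing maps which are close on a single fundamental domain must agree globally by equivariance and connectedness. Combined, these give that $\rho$ is a local homeomorphism.

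For the embedding statement when $\Sigma = \Sigma_g$ is closed, the new ingredient is global rather than local rigidity: any two complete hyperbolic structures on $\Sigma_g$ with conjugate holonomy representations are equivalent marked $(G,X)$-structures. Here I would invoke the uniformization theorem of Example~\ref{exam:Uniformization}: both structures present $\Sigma_g$ as $\Gamma \backslash \HS$ for the same discrete subgroup $\Gamma \simeq \rho(\pi_1 \Sigma_g) < \PSL$, so the identity on $\HS$ descends to a marked isometry. Equivalently, on the level of developing maps, each $\Dev$ is a globally defined $\rho$-equivariant isometry $\widetilde{\Sigma_g} \to \HS$, and any two such maps differ by an element of $G$, showing the holonomy class determines the marked structure uniquely.

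The main obstacle is the embedding step rather than the local homeomorphism step: the local construction is essentially formal and goes through for any $(G,X)$, but the global injectivity uses closedness of $\Sigma_g$ in an essential way. For surfaces with punctures or boundary (like $S_{0,4}$ in Theorem~\ref{thm:MAIN1}, or the quasi-Fuchsian setting of Example~\ref{ex:QF}), the conclusion can fail without additional conditions — one must bake in completeness or a boundary-holonomy constraint — and phenomena such as grafting supply genuine non-trivial deformations of the developing map preserving the holonomy. Making the compactness argument honest, and being careful about the marking equivalence relation used in the definition of $\mathcal{D}_{(G,X)}$, is where the real work lies.
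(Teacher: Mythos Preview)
The paper does not prove this theorem: it is stated as a cited background result (with reference to \cite{Goldman4}) and the text moves directly on to the next remark. There is therefore no proof in the paper to compare your proposal against.

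On its own merits, your sketch follows the standard Ehresmann--Thurston line, and the overall architecture is right: build a local section of the holonomy map by deforming developing maps, and obtain global injectivity for closed hyperbolic $\Sigma_g$ via completeness and uniformization. Two places deserve tightening. First, the local-section construction as written is too loose: you cannot choose ``small elements of $G$'' independently on the pieces $\widetilde{U}_i \cap F$ and expect the equivariance condition to close up across $\partial F$; the honest argument realizes the perturbed developing map as a section of the flat $(G,X)$-bundle with holonomy $\rho'$ and uses a partition of unity (equivalently, the openness of immersions in the $C^\infty$ topology). Second, your local-injectivity step is incorrect as stated: two $\rho'$-equivariant developing maps that are merely close on a fundamental domain need not \emph{agree}; what is true is that they are isotopic through $\rho'$-equivariant local diffeomorphisms and hence define the same class in $\mathcal{D}_{(G,X)}(\Sigma)$. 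You flag the marking equivalence as the subtle point, but the sentence you wrote does not actually invoke it. Your treatment of the embedding step for closed $\Sigma_g$, and your caveat that it fails without completeness (punctured surfaces) or for other $(G,X)$ (grafting of $\mathbb{CP}^1$-structures), is accurate.
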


 \begin{remark} In $4PS-$algorithm,  $G = \PSL$, $\pi_1 \simeq \F_3$, and $\pi_1$ acts freely on $\HS$. This deformation space is the Fricke space $\mathcal{F}(\Sigma)$ contains equivalence classes of  \textit{complete} \textit{marked} hyperbolic structures on $\Sigma_{0,4}$.  
 \end{remark}
 
For closed surfaces the uniformization theorem can be reinterpreted in this context as identifying $\mathcal{F}(\Sigma_g)$ with $\mathcal{T}(S_{g})$~\cite{Goldman1}. In fact, the holonomy representation $\rho$ embeds $\mathcal{F}(\Sigma_{g})$ as a \textit{connected component} of $\Hom(\pi_1,\PSL)/\PSL$. It is a theorem of Goldman that representations $\rho$ with maximal \textit{Euler class} $e(\rho)$, (maximal with respect to the \textit{Milnor-Wood inequality}: $|e(\rho)| \le |\chi(\Sigma)|$) are discrete and faithful and correspond to an entire component of $\mathcal{R}(\pi_1)$~\cite{Goldman2}.  Note that $e(\rho)$ may be understood as a generalization of \textit{rotation number} of a homeomorphism. [Just as the unit tangent bundle on a surface corresponds to the space of directions, the fiber-wise restriction of a Riemannian metric on a vector bundle endows each fiber with an \textit{angular form} $\psi$. In particular, $e(\rho)$ is defined as the \textit{differential} of the angular form $d\psi$ in each fiber of a \textit{sphere-bundle} on $S$~\cite{Milnor2}~\cite{Goldman2}.]  Moreover Goldman shows every integer suggested by the Milnor-Wood occurs (there are $4g-3$ components of $\mathcal{R}(\pi_1)$ for $S_g$), and that the $\Mod$ action is not properly discontinuous on the non-maximal components of $\mathcal{R}(\pi_1)$~\cite{Goldman2}. The behavior of non-maximal components is more mysterious dynamically, but by no means less interesting, they contain points which correspond to \textit{singular} hyperbolic structures~\cite{Goldman3}.

The most natural way to understand the topology of $\mathcal{F}(\Sigma_{g})$ is via Fenchel-Nielsen coordinates, which are \textit{length-twist} coordinates on pairs of \textit{hyperbolic pants}. These homeomorphically identify $\mathcal{F}(\Sigma_{g})$ with $\R^{6g-6}$. In particular, the deformation space of complete hyperbolic structures on $\Sigma_g$ has real dimension $dim_{\R} =6g-6$. 


Certain \textit{algebro-geometric-analytic} objects called \textit{character varieties}, denoted here by $\mathfrak{X}_{\Gamma}(G)$, parametrize the deformation spaces of hyperbolic structures on $\Sigma$~\cite{Goldman1}~\cite{ABL}.  Character varieties are obtained from representation varieties by forming an appropriate \textit{geometric invariant theory} quotient. Precisely, we make the following identification of orbits: 
 \[
 \mathfrak{X}_{\Gamma}(G) := \Hom(\Gamma , G) / /  G, \text {where } [\rho_1] \sim [\rho_2] \text{ if and only if }  \overline{[\rho_1]} \cap \overline{[\rho_2]} \ne \emptyset.
\]
 A theorem of Procesi is  $\mathfrak{X}_{\Gamma}(G)$ is generated by trace functions, $|\Tr|$~\cite{Goldman1}~\cite{ABL}. 
 
 \begin{theorem}\label{thm:Goldman} [Goldman] In~\cite{Goldman1}, the $\SLC$-character variety of a rank 2 free $\F_2$ is identified with $\C^3$; furthermore, by relating the $FN$-coordinates of the \textit{three-holed sphere} $\Sigma_{0,3}$ to trace parameters corresponding to the boundary components, the Fricke space of $\Sigma_{0,3}$ is identified with $(-\infty, -2]^3$.  Goldman also gives a geometric description of the irreducible $\SLC$-representations of $\F_2$ as ``mildly degenerate" hexagons, from which he builds a \textit{Coxeter extension} of $\F_2$, via reflection in the common perpendiculars of the axes; ultimately describing the Fricke space of the \textit{one-holed torus} $\Sigma_{1,1}$. See~\cite{Goldman1} for details, including descriptions of the character varieties of $\Sigma_{0,4}$ and $\Sigma_{1,2}$. 
 \end{theorem}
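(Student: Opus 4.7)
The proof naturally splits into three tasks matching the three claims of the theorem, and my plan is to attack them in that order, leveraging the theorem of Procesi mentioned just before the statement.

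First, to identify $\mathfrak{X}_{\F_2}(\SLC)$ with $\C^3$, I would introduce the three trace coordinates $x := \Tr(X)$, $y := \Tr(Y)$, $z := \Tr(XY)$ on $\Hom(\F_2, \SLC)/\!/\SLC$, where $\F_2 = \langle X, Y\rangle$. By Procesi's theorem the invariant ring is generated by traces of words, and one reduces all words to these three using the Cayley--Hamilton identity $\Tr(AB) + \Tr(AB^{-1}) = \Tr(A)\Tr(B)$ in $\SLC$, which allows any word to be rewritten inductively with only shorter trace polynomials appearing. Hence the trace map lands in $\C[x,y,z]$ and separates closed orbits. For surjectivity, given arbitrary $(x,y,z)\in \C^3$, I would exhibit explicit representatives, for instance
\[
X = \begin{pmatrix} x & -1 \\ 1 & 0 \end{pmatrix}, \qquad Y = \begin{pmatrix} 0 & \zeta \\ -\zeta^{-1} & y\end{pmatrix},
\]
and solve $\Tr(XY) = z$ for $\zeta$. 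Injectivity on characters of irreducible representations is the classical Fricke--Vogt rigidity statement: any two irreducibles with the same character are conjugate, which can be verified by diagonalizing $X$ when $x \ne \pm 2$ and handling the parabolic case separately.

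Second, to identify the Fricke space $\mathcal{F}(\Sigma_{0,3})$ with $(-\infty,-2]^3$, I would use the presentation $\pi_1(\Sigma_{0,3}) = \langle a,b,c \mid abc=1\rangle \simeq \F_2$ so that each of $a, b, c=(ab)^{-1}$ corresponds to a boundary component. A discrete faithful representation into $\PSL$ sends each boundary generator to either a hyperbolic element (geodesic boundary) or a parabolic (cusp), which on the $\SLC$-level gives $|\Tr| \ge 2$. Choosing lifts to $\SL$ with the standard convention that boundary elements have negative trace singles out the half-line $(-\infty,-2]$ for each coordinate, and every such triple is realized thanks to the classical rigidity of hyperbolic pairs of pants: a hyperbolic pair of pants is determined up to isometry by its three cuff lengths, which are recovered from traces via $2\cosh(\ell/2) = |\Tr|$.

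Third, for the geometric hexagon description and the one-holed torus $\Sigma_{1,1}$, I would pass to $\HS^3$ and consider, for an irreducible representation, the three axes of $X$, $Y$, $XY$ in $\SLC$. Generically these three lines are pairwise skew, and their three pairwise common perpendiculars, together with short sub-arcs of the axes, bound a right-angled skew hexagon in $\HS^3$ (the ``mildly degenerate'' cases correspond to axes meeting or coinciding). Reflections $R_1, R_2, R_3$ in the three common perpendiculars generate the Coxeter extension $\widetilde{\F_2}$ containing $\F_2$ as an index-two subgroup via $X = R_2R_3$, $Y = R_3R_1$, $XY = R_2R_1$. For $\Sigma_{1,1}$ the boundary element is $[X,Y]$, whose trace is computed by Fricke's identity $\Tr([X,Y]) = x^2+y^2+z^2-xyz-2$; imposing the boundary condition $\Tr([X,Y])\le -2$ cuts out the Fricke locus inside $\C^3$, parametrized by the Coxeter hexagon data.

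The main obstacle is the second paragraph: proving that every triple in $(-\infty,-2]^3$ actually arises from a \emph{discrete and faithful} representation, rather than merely from an abstract representation with the prescribed boundary traces. This requires the pants rigidity theorem together with a verification that the constructed representation is discrete, which one can handle either by an explicit Schottky/ping-pong construction as in Example~\ref{exam:pingpong} for the boundary generators, or by invoking the Poincar\'e polygon theorem on a right-angled hexagonal fundamental domain in $\HS$. The hexagon description in the third step is conceptually elegant but computationally delicate in the degenerate strata, which is why Goldman's original treatment is careful about the mild degenerations.
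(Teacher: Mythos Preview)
Your outline is a reasonable sketch of how Goldman's results are actually established in~\cite{Goldman1}, but there is nothing in the present paper to compare it against: Theorem~\ref{thm:Goldman} is stated here purely as a citation, with the explicit instruction ``See~\cite{Goldman1} for details,'' and the paper supplies no proof of its own. The theorem functions expositionally, situating the $4PS$-algorithm among related character-variety results, not as a claim the author undertakes to demonstrate.

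That said, your sketch tracks Goldman's argument fairly closely. The identification of $\mathfrak{X}_{\F_2}(\SLC)$ with $\C^3$ via the Fricke--Vogt trace coordinates and Cayley--Hamilton reduction is exactly the classical route; the pants-rigidity argument for $\mathcal{F}(\Sigma_{0,3})$ and the sign convention giving $(-\infty,-2]^3$ are standard; and the right-angled hexagon built from common perpendiculars of the axes, with the index-two reflection group as Coxeter extension, is precisely Goldman's geometric picture. One small caution: in your explicit matrices for surjectivity, the equation $\Tr(XY)=z$ in the variable $\zeta$ need not have a nonzero solution for every $(x,y,z)$, and the reducible locus $x^2+y^2+z^2-xyz-4=0$ requires separate handling, so a complete argument splits into irreducible and reducible strata rather than relying on a single uniform formula.
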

 \begin{theorem}\label{thm:MPT}[Maloni-Palesi-Tan] In~\cite{MaloniPSPT}, using Morse theoretic techniques the $\Mod$ action on \textit{relative} $\SLC$-character varieties of $\Sigma_{0,4}$ is described.  In particular, the existence of a non-empty domain of discontinuity for $\PSL$ representations of 4-punctured sphere groups is demonstrated. Whereby regions of $\PSL$-character varieties of $\Sigma_{0,4}$ whose points correspond to singular hyperbolic metrics on $\Sigma_{0,4}$ are parametrized.
 \end{theorem}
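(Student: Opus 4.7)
The plan is to study the action of $\Mod(\Sigma_{0,4})$ on the relative $\SLC$-character variety via explicit trace coordinates on a cubic surface, then apply Morse theory to a proper invariant function to locate a non-empty domain of discontinuity, and finally interpret its complement in geometric terms. First I would set up coordinates on the relative character variety: since $\pi_1(\Sigma_{0,4}) \simeq \F_3$ with generators $a,b,c$ corresponding to loops around three of the four punctures (the fourth peripheral element being $d = (abc)^{-1}$), the $\SLC$-character variety of $\F_3$ admits a classical Fricke description in terms of seven trace functions. Fixing the four peripheral traces $\Tr(a), \Tr(b), \Tr(c), \Tr(d)$ cuts out a relative character variety $\mathfrak{X}_{\vec{t}}$ parametrized by the three interior traces $x = \Tr(ab)$, $y = \Tr(bc)$, $z = \Tr(ca)$, subject to a single cubic equation in $\C^3$. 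This extends the $\F_2$ analysis recalled in Theorem~\ref{thm:Goldman} and places the deformation space on an explicit affine cubic surface on which the mapping class group acts polynomially.

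Next, I would describe the $\Mod(\Sigma_{0,4})$ action on $\mathfrak{X}_{\vec{t}}$. The mapping class group of the four-punctured sphere is generated by Dehn twists about the three essential simple closed curves separating pairs of punctures. In the coordinates $(x,y,z)$ each such twist is an explicit polynomial automorphism of the cubic, and the resulting group of automorphisms is, up to a finite kernel, an index-finite subgroup of $\operatorname{PGL}(2,\Z)$-type symmetries generalizing classical Markoff dynamics.

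The Morse-theoretic core would then exploit a proper real-valued function on the real slice of $\mathfrak{X}_{\vec{t}}$, for instance a suitably weighted version of $\kappa(x,y,z) = x^2+y^2+z^2-xyz$ tuned to the fixed boundary traces. Analyzing its critical locus and gradient trajectories in concert with the polynomial formulas for Dehn twists produces a $\Mod$-invariant decomposition into a candidate domain of discontinuity $\Omega$ and its complement. To show $\Omega$ is non-empty, one would isolate a Bowditch-type condition --- a finite, algorithmically checkable inequality on the traces of simple closed curves that is stable under Dehn twists --- and exhibit an explicit open region containing the Fuchsian locus from Example~\ref{ex:QF} on which the condition persists.

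The hardest step will be verifying that the candidate region $\Omega$ is genuinely $\Mod$-invariant and that the action on $\Omega$ is properly discontinuous. This requires uniform control of how iterated Dehn twists inflate trace values along infinite geodesic rays in the Cayley graph of $\Mod$, and it is precisely here that the Morse-theoretic control over trajectories of the proper function becomes indispensable. Once $\Omega$ is established, parametrizing the singular hyperbolic metrics reduces to identifying, within $\mathfrak{X}_{\vec{t}} \cap \R^3$, those representations for which some peripheral or interior simple-closed-curve element is elliptic: such points lie outside the complete Fuchsian locus but still carry $\PSL$-valued characters, and geometrically they correspond to hyperbolic structures with conical singularities at one or more of the marked points, yielding the desired parametrization of the singular-hyperbolic regions of the character variety.
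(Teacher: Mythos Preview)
The paper does not contain a proof of this statement: Theorem~\ref{thm:MPT} is a citation of external work by Maloni--Palesi--Tan and is stated in the background section purely to situate the $4PS$-algorithm among related results, with no argument given beyond the reference~\cite{MaloniPSPT}. Consequently there is no ``paper's own proof'' to compare your proposal against.

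That said, your sketch is a reasonable summary of the strategy actually carried out in the cited source: the trace coordinates on the cubic surface, the polynomial action of Dehn twists, the proper function playing the role of a Morse function, and the Bowditch-type $BQ$-conditions are all ingredients of the Maloni--Palesi--Tan argument. One point to sharpen: the domain of discontinuity they exhibit is not simply the Fuchsian/quasi-Fuchsian locus but genuinely contains non-discrete representations as well, and the ``singular hyperbolic'' interpretation you give at the end is looser than what is established there --- the parametrization is of components of the real locus rather than a direct classification of cone metrics. If you intend to supply a proof where the paper gives none, you should either reproduce the full argument from~\cite{MaloniPSPT} or, more appropriately for this expository paper, simply leave the statement as a cited result.
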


\section{Automata}
Now we turn to notions of \textit{decidability} and \textit{complexity} of algorithms for groups. The comparative analysis and complexity of algorithms requires making a lot of choices~\cite{BSS}~\cite{Epstein}~\cite{Gilman95}~\cite{Gilman97}~\cite{Stillwell1}~\cite{Weinberger}~\cite{Kapovich1}. The choices may seem naive, but they are actually quite subtle. For example, what does one mean by algorithm? Or what does one mean by decidability? --- The complexity analysis of algorithms is more qualitative but choice remains. For example, complexity should measure what? We follow~\cite{GilmanMaskit}~\cite{Gilman97}, taking an \textit{algorithm} to be a procedure which stops in finite time on \textit{all} input values, as apposed to a \textit{semi-algorithm} which may go on forever or output nonsense for some input values. The concept of the $GM-$algorithm as a \textit{two-state automata} was first raised in~\cite{Silverio}, we do not pursue here.

\subsection{$GM-$algorithm} The $GM-$algorithm codifies all previous $2-$generator $\PSL$ discreteness algorithms. The overarching conceptual framework is to pair the process of \textit{trace minimizing} with the \textit{Poincar\'e/J\o rgensen dichotomy}~\cite{Gilman95}.  All possible types of generating pairs are explicated into two disjoint cases: \textit{non-intertwining}  and \textit{intertwining}, two hyperbolic generators with intersecting axes and all other cases, respectively. The Poincar\'e/J\o rgensen dichotomy describes an \textit{``if-else-if"} check of the necessary condition(J\o rgensen) against the sufficient condition(Poincar\'e), with each change of generating set. The algorithm ends when a fundamental domain is built or can be determined to exist. Trace minimization works like this: starting with the ordered generating set $\{ A, B \}$, replace this generating set with the generating set $\{B,A\}$  (called a  \emph{switch}), $\{A,B^{-1}\}$ (called a \emph{inversion}), or $\{A,AB\}$ (called a \emph{twist})~\cite{Gilman95}. These replacement moves are called \textit{Nielson transformations}, they are done to minimize the absolute value of the traces of the generators or to prepare the generators for minimization. Gilman and Maskit also give a geometric interpretation of the $GM-$algorithm: trace minimizes corresponds to minimizing the length of the longest side of a hyperbolic triangle determined by the intersecting axes of $A$ and $B$.  Furthermore a \textit{Coxeter extension} $\widetilde{\Gamma}= \langle A, B, AB \rangle$ of $\Gamma=\langle A,B \rangle$ is employed by the $GM$-algorithm; either both $\widetilde{\Gamma}$ and ${\Gamma}$ are discrete or both are not discrete.  Lastly, we emphasize that understanding the \textit{commutators} of the changes in generators along with the so called \textit{acute triangle} theorem enables the $GM-$algorithm to decide when elliptic elements will or will not have finite order. [It is a fact that hyperbolic area is independent of generating set of underlying edge identifications~\cite{Beardon}.] The $GM-$algorithm determines whether a fundamental domain for $\Gamma > \PSL$ can or can not be built, in finitely many steps.

\subsection{Decidability}\label{sec:decidability}
Recall the celebrated theorem of G{\"o}del's which may be interpreted as saying \textit{undecidable} questions exist~\cite{Weinberger}. Many questions in topology and geometry we would like to build an algorithm for have been encoded by group theory into \textit{combinatorial group theory} and \textit{geometric group theory} (or \textit{logic})~\cite{Stillwell1}. Psychologically, a finite presentation like the one in Equation~2.1, can be thought of as describing a group like an axiomatic system~\cite{Weinberger}. A group is defined by generators, subject to a given set of relations. For example, the \textit{Tietze} theorem states that two presentations define the same group if and only if there is a sequence of Nielsen moves that relate the two presentations; the \textit{Novikov--Boone} theorem states that there are finitely presented groups with an unsolvable \textit{word problem}~\cite{Weinberger}. The \textit{word problem} asks one to give an algorithm for deciding whether a given combination of generators represents the trivial element of the group~\cite{Stillwell1}. It can be shown that this is an implicit property of a group, in the sense that it does not depend on the way the group is presented. We follow~\cite{Kapovich1} and take decidability to mean an algorithm \textit{halts} either on $P$ or on $\neg P$. Decidability questions in general depend on formulation. For example, decidability questions involving real value inputs $\R$ can be decided with two (not necessarily congruent) models, \textit{BSS-computability} or \textit{bit-computability}~\cite{Kapovich1}. 

\subsection{Complexity}
The analysis of the complexity of algorithms reveals that decidability questions are not merely dialecticism or logical piddling. The following is proved in~\cite{Gilman95}.
\begin{theorem}\label{thm:complexity}
If $\Gamma$ is as described in Theorem~\ref{thm:GM}, except the coefficients are restricted from $\R$ to $\Q$, then the complexity of discreteness algorithm can been seen to be \textit{linear} in the number of generators. 
\end{theorem}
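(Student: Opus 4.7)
The plan is to trace through the $GM$-algorithm step by step and control both the per-step cost and the total number of steps under the rational restriction. The strategy rests on the decomposition of the algorithm into (a) executing a Nielsen transformation (switch, inversion, or twist), (b) evaluating the J\o rgensen/Poincar\'e dichotomy, and (c) iterating until halting, and on showing that each of these pieces has tightly controlled cost over $\Q$.

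First I would establish that Nielsen transformations over $\Q$ are cheap. Each of the three elementary moves replaces $(A,B)$ by a new ordered pair whose entries are explicit polynomial combinations (of bounded degree) of the entries of $A$ and $B$. Restricting to $\Q$, these entries remain rational, and the bit-length of numerators and denominators grows by only an additive constant per move. Similarly, the J\o rgensen quantity $|\Tr(A)^2-4|+|\Tr([A,B])-2|$ and the traces of $A,B,AB$ are computable in a constant number of field operations. Hence each iteration of the dichotomy-plus-trace-minimization loop is $O(1)$ arithmetic work.

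Next I would bound the Nielsen chain length. Here I would invoke Gilman--Maskit's geometric interpretation: trace minimization corresponds to shortening the longest side of the hyperbolic triangle cut out by the axes of the generators. The key input is that over $\Q$ this length is measured by an integer-valued combinatorial parameter (extracted from trace data) which strictly decreases after each successful macro-step of Nielsen moves and is bounded below by J\o rgensen's inequality. Consequently the number of macro-steps, and hence the total number of Nielsen generators produced along the way, is linear in the initial bit-complexity of the input pair. Combining this with step one yields the linear bound.

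The main obstacle will be step three: a single Nielsen move does not always decrease the trace parameter, so one must group moves into macro-steps and argue by amortization that each macro-step strictly reduces a well-chosen discrete invariant. The delicate point is simultaneously controlling denominator growth (to ensure that ``size of the generators'' is not inflated between macro-steps) and geometric length reduction. The rational restriction is essential precisely here: it keeps the invariant discrete and bounds clearing-denominator blowups, so that the geometric decrease translates into a combinatorial decrease on a well-ordered integer-valued quantity. Once this amortized bound is in hand, the linearity statement follows by assembling the constant per-step cost with the linear bound on the number of Nielsen generators in the chain.
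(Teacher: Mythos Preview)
The paper does not supply its own proof of this theorem: it is introduced with the sentence ``The following is proved in~\cite{Gilman95}'' and no argument follows. There is therefore no in-paper proof to compare your proposal against; the result is simply quoted from Gilman's memoir.

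Regarding your sketch on its own terms, two issues deserve attention. First, the assertion that the bit-length of the rational entries grows by ``only an additive constant per move'' is not obviously correct: a Nielsen twist $(A,B)\mapsto(A,AB)$ replaces the entries of $B$ by sums of products of entries of $A$ and $B$, so in general the bit-lengths of the two factors add rather than increasing by a fixed constant. Over many steps this lets the entry sizes grow linearly in the step count, and the total bit-cost of the arithmetic may then be super-linear even if the number of field operations per step is $O(1)$; your amortization argument would need to control this growth as well. Second, the phrase ``linear in the number of generators'' in the stated theorem is ambiguous for a rank-two group; your reading (linear in the input bit-size, equivalently in the length of the Nielsen chain) is one reasonable interpretation, but you should reconcile it explicitly with Gilman's own formulation, where the linear bound is on the number of algorithm steps as a function of a Farey-level/continued-fraction parameter extracted from the initial trace data.
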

 As a consequence of Theorem~\ref{thm:complexity}, necessary and sufficient conditions for discreteness do not exist, therefore an algorithm is the best possible solution of the discreteness problem described by Theorem~\ref{thm:GM}~\cite{Gilman95}. The following is a compelling related result.
 
 \begin{theorem}\label{thm:Kapovich1}[Kapovich] Discreteness is $BSS$-undecidable for free, rank$-2$ subgroups of $\SLC$~\cite{Kapovich1}.
 \end{theorem}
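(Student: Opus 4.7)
The plan is to reduce a known $BSS$-undecidable problem to the discreteness problem for rank-$2$ subgroups of $\SLC$, using the Bers--Sullivan dictionary between Kleinian groups and iteration of rational maps. The natural target for such a reduction is membership in the Mandelbrot set $\mathcal{M} \subset \C$, whose $BSS$-undecidability was established by Blum and Smale.

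First, I would invoke the structural characterization of $BSS$-computable sets over $\R^{n}$: the halting set of a $BSS$ machine is a countable union of basic semi-algebraic sets defined over the machine's real constants, so a $BSS$-decidable set is one for which both the set and its complement have such a countable semi-algebraic description. Hence, if $D := \{(A,B) \in \SLC^{2} : \langle A,B \rangle \text{ is discrete}\}$, viewed inside $\R^{24}$, were $BSS$-decidable, then both $D$ and its complement would admit such descriptions. The remainder of the argument aims to produce a $BSS$-computable reduction of $\mathcal{M}$ to $D$.

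Next, I would construct the reduction using a one-complex-parameter slice of the $\SLC$-character variety whose discreteness locus is combinatorially modeled on quadratic polynomial dynamics --- for example the Maskit slice or the Riley slice appearing in Theorem~\ref{thm:MAIN1}. By work of Keen--Series, McMullen and Wright, the boundary of the discreteness locus in such a slice is a Jordan curve of fractal type, along which the cusp points (accidental parabolics) are arranged on a combinatorial tree that mirrors the external ray structure of $\mathcal{M}$. Exploiting this, I would write down an explicit polynomial map $\Phi : \C \to \SLC^{2}$, polynomial in $c$ and $\bar{c}$ and therefore $BSS$-computable, such that discreteness of $\langle \Phi(c) \rangle$ is equivalent to membership of $c$ in some $BSS$-undecidable subset of $\mathcal{M}$. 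Composing a hypothetical $BSS$-decision procedure for $D$ with $\Phi$ would then decide that subset, contradicting the Blum--Smale theorem.

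The main obstacle is turning the qualitative Bers--Sullivan analogy into a genuinely rigorous $BSS$-reduction: the required map $\Phi$ must encode the Mandelbrot membership problem exactly, not merely analogously, and must respect the $BSS$ notion of computability (rational operations and equality tests over the constants of the machine). As a fallback, I would replace the explicit reduction by a structural obstruction: show that in a suitable slice, the discreteness locus contains a self-similar Cantor-type set of parameters whose membership is provably not expressible as a countable union of semi-algebraic conditions, so that non-$BSS$-decidability follows directly from the structure theorem for $BSS$-halting sets without identifying an explicit reduction.
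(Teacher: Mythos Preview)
Your primary approach --- a $BSS$-computable reduction from Mandelbrot membership via the Sullivan dictionary --- is not the route taken, and you correctly identify its weakness: the dictionary is an analogy, not a theorem, and no rigorous polynomial $\Phi$ with the required equivalence is known. So this branch of the proposal is speculative rather than a proof.

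Your fallback is much closer in spirit to the actual argument, but it misses the key ingredient. The paper records that Kapovich's proof works by showing that the \emph{boundary} of the Maskit slice (geometrically finite, faithful once-punctured-torus representations in $\SLC$) is not a countable union of algebraic sets, hence not $BSS$-computable. The subtle point --- and the reason the argument is deep --- is that one must know precisely which boundary points of the slice correspond to \emph{discrete} groups in order to conclude anything about the discreteness locus itself; this is where the ending lamination theorem of Minsky and Brock--Canary--Minsky enters. Your proposal invokes Keen--Series/McMullen-style fractal structure of slice boundaries, which is relevant background, but it does not mention the ending lamination classification, and without it you cannot pass from ``the slice boundary is complicated'' to ``the set of discrete pairs is not $BSS$-decidable.'' The Cantor-set formulation in your fallback is also not quite the right object: the obstruction lives in the full boundary curve of the slice, not in a Cantor subset of it.
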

The proof of Theorem~\ref{thm:Kapovich1} involves demonstrating that boundary of the \textit{Maskit slice} (the Maskit slice is the set of \textit{geometrically finite} and \textit{faithful} representations of once-punctured torus groups into $\SLC$)  is \textsc{not} a countable union of algebraic sets. In particular, the boundary of the Maskit slice is not BSS-computable. This ultimately requires \textit{the ending lamination conjecture}~\cite{Minsky}~\cite{BrockCanaryMinsky}. We add one more result before concluding these brief remarks on algorithms. Using \textit{course geometry} (in the sense of Gromov~\cite{Gromov}) the following related discreteness problem is being prepared.
 \begin{theorem}\label{thm:Kapovich2} [Kapovich] Given $\rho : \F_k \longrightarrow \SLC$ a non-elementary representation, discreteness and faithfulness is bit-computability decidable~\cite{Kapovich2}.  
\end{theorem}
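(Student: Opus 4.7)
The plan is to decide discreteness and faithfulness of a non-elementary $\rho : \F_k \longrightarrow \SLC$ by running, in parallel, two bit-computable semi-algorithms: one that halts exactly when $\rho$ fails the property, and one that halts exactly when $\rho$ satisfies it. Since the two alternatives are exhaustive on non-elementary input, parallel execution yields a total algorithm and hence bit-decidability in the sense of Section~\ref{sec:decidability}.

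For the falsification semi-algorithm, I would enumerate reduced words $w \in \F_k$ in order of length and, at successively finer bit-precision, compute the matrix $\rho(w)$. Two halting conditions are bit-checkable: (i) \emph{non-faithfulness}, witnessed by a word $w \ne 1$ whose image is certifiably the identity; and (ii) \emph{non-discreteness}, witnessed by a pair of non-commuting words $u,v$ whose images simultaneously lie within a fixed Margulis neighborhood of $\Id \in \SLC$, thereby violating J\o rgensen's inequality. The Margulis lemma for $\HS^3$, together with the standing assumption that $\rho(\F_k)$ is non-elementary, guarantees that such a witness exists whenever $\rho$ is indiscrete or non-injective, and hence is found after finitely many steps of the enumeration.

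For the verification semi-algorithm, I would appeal to the coarse-geometric fact that $\F_k$ is a tree and hence a $0$-hyperbolic space, and test whether the orbit map $w \longmapsto \rho(w)\cdot x_0 \in \HS^3$ is a $(K,C)$-quasi-isometric embedding on the ball of radius $R$ in the Cayley graph of $\F_k$, for rational constants $(K,C,R)$ and a rational basepoint $x_0$. By Gromov's local-to-global principle for quasi-geodesics in hyperbolic targets, there is a computable threshold $R=R(K,C)$ beyond which a local quasi-isometric embedding is automatically a global one; the latter forces $\rho$ to be discrete and faithful. Each local quasi-isometric embedding condition reduces to a finite system of bit-checkable inequalities among matrix entries of the $\rho(w)$ and the coordinates of $x_0$, so the search over rational $(K,C,R,x_0)$ halts precisely when $\rho$ is convex cocompact.

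The main obstacle is the presence of parabolics, for which no global quasi-isometric embedding of $\F_k$ can exist. To cover the geometrically finite case with cusps, I would run a third semi-algorithm in parallel that detects parabolic conjugacy classes (words $w$ whose trace is provably $\pm 2$), cones off their cyclic subgroups in the Cayley graph of $\F_k$, and applies the local-to-global principle for \emph{relative} quasi-geodesics in the resulting relatively hyperbolic space, in the sense of Farb, Drutu--Sapir, and Hruska. The heart of the argument, and the place where ``coarse geometry in the sense of Gromov" does all the work, is verifying that on non-elementary input the union of the verification semi-algorithms exhausts every discrete faithful $\rho$, so that the combined parallel procedure is total rather than merely semi-decidable.
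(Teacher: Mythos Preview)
The paper does not actually prove this theorem: it is stated with attribution to Kapovich and the citation~\cite{Kapovich2}, with only the one-line hint that the argument uses ``coarse geometry (in the sense of Gromov).'' So there is no proof in the paper to compare against; your outline is in fact consonant with that hint, and the two-sided semi-algorithm architecture you describe is the expected shape of such an argument.

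That said, your proposal has a genuine gap, and you have correctly located it yourself without closing it. Your verification side --- even with the relative-hyperbolicity extension --- certifies at most the \emph{geometrically finite} discrete faithful representations: a global (relative) quasi-isometric embedding of the Cayley graph forces geometric finiteness. But $\F_k$ admits discrete faithful representations into $\SLC$ that are geometrically \emph{infinite}; already for $k=2$ there are singly and doubly degenerate once-punctured torus groups. On such a $\rho$ your falsification branch never halts (the representation is discrete and faithful, so no J\o rgensen violation or identity witness exists), and your verification branches never halt either (no $(K,C)$ works globally, and coning off the parabolics does not help because the degeneration is not parabolic in origin). Thus as written the combined procedure is not total, and you have a pair of semi-algorithms rather than a decision procedure.

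To turn this into a proof you must either (a) supply a certificate scheme that halts on every geometrically infinite discrete faithful $\rho$ --- this is where the real content lies, and it is not a routine application of Gromov's local-to-global principle --- or (b) argue that in the bit-computability model such boundary representations can be handled by some other mechanism. Simply naming the difficulty in your last sentence is not the same as resolving it.
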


\section{Proof of Main Theorem}
The goal of this section is to give a proof of Theorem~\ref{thm:MAIN1}. Recall, the $4PS$-algorithm is for subgroups $\Gamma$ of $\PSL$ generated by three parabolic generators: $\langle A,B,C\rangle = \Gamma   \simeq \F_3$, and $\Gamma$ acts freely on $\HS$. The strategy of proof for the $4PS$-algorithm is the same as that of the GM-algorithm. Namely the proof proceeds via Poincar\'e/J\o rgensen dichotomy and trace minimizing. Either a fundamental domain for $\Gamma$ can be constructed after a finite sequence of changes of the \textit{ordered} generating set $\{A, B, C\}$, or else one is able to determine that $\Gamma$ is not discrete. Figure~\ref{fig:ThreeParGood} is an example of a fundamental domain of a discrete rank 3 free subgroup of $\PSL$. 

\begin{figure}
\centerline{\includegraphics[width=3.0in]{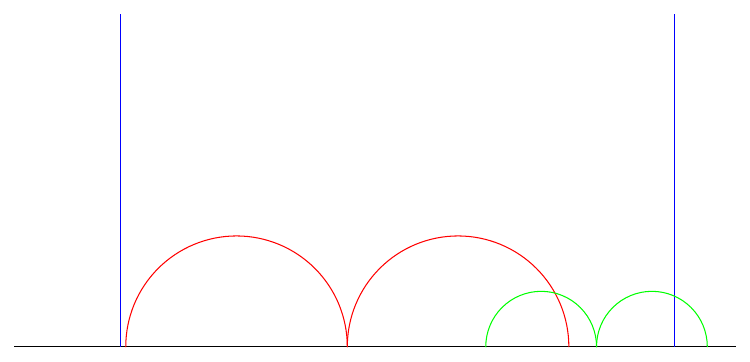}}
\caption{Configuration for stopping with 3 Parabolics}
\label{fig:ThreeParGood}
\end{figure}


\subsection{Set-Up}\label{sec:setup} As in the $2-$generator case,  conjugation of $\Gamma$ by any element $A \in \PGL$ leaves trace invariant. Hence it is possible to conjugate an element $A$ of the generating set, by an element of $\PGL$, say $X$, denoted by  $A \mapsto A^{X} := XAX^{-1}$ so that the fixed point of $A^{X}$ is $\infty$. Furthermore, we may conjugate $A^{X}$ by suitable diagonal matrix in $\PSL$ so that its translation length is $\pm 2$.  Next, we may conjugate the generating set by a parabolic which fixes $\infty$, so that the fixed point of one of the remaining generators, say $B$, is 0. Finally, we may conjugate the generating set by a reflection in the line from 0 to $\infty$ so that the fixed point of $C$ is a positive real, label this $x$. 

Passing to inverses if needed $A,B,C$ can be represented by the following matrices: 

\begin{equation}\label{eq:generators}
A=\begin{bmatrix}
1 & 2 \\  0&1 \end{bmatrix} , 
B=\begin{bmatrix}1 & 0 \\  \frac{-2}{y}&1 \end{bmatrix},
C= \begin{bmatrix}
1-\frac{2x}{z} & \frac{2x^2}{z} \\  \frac{-2}{z}& 1 + \frac{2x}{z} \end{bmatrix}.
\end{equation}

The significance of the triple of numbers $(x,y,z)$ lies in the geometry of the \textit{Ford domain}. 

\begin{definition} A \textit{Ford domain} for an element of $\PSL$ which does not fix $\infty$, is a ``limit" of a Dirichlet domain in which the center point $z_0$ goes to $\infty$. 
\end{definition}

\begin{figure}
\centerline{\includegraphics[width=3in]{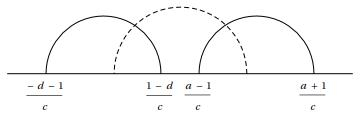}}
\caption{Ford domain for a generic hyperbolic element.}
\label{fig:HyperbolicFord}
\end{figure}
In particular, a Ford domain of a general hyperbolic element $G=$
$\begin{bmatrix}
a & b \\  c & d \end{bmatrix}$ is bounded by the geodesic whose endpoints are $\frac{-d}{c} \pm \frac{1}{c}$ and the geodesic $\frac{a}{c} \pm \frac{1}{c}$. Notice that the Ford domain is symmetric with respect to the vertical line centered at $\frac{a-d}{2c}$, see Figure~\ref{fig:HyperbolicFord}. The diameter of the compliment of one connected component of the complement of a Ford domain is $\frac{2}{|c|}$, which we will call the \textit{Ford strength} of the transformation. [Therefore, with $B$ and $C$ as in Equation~\ref{eq:generators}, $y$ is the Ford strength of $B$ and $z$ is the Ford strength of $C$. Also notice that the \textit{fixed points} of the hyperbolic transformation described in Figure~\ref{fig:HyperbolicFord} is not the midpoints of the compliment of the Ford domain.] We will call the Euclidean distance between the connected components of the complement of the Ford domain the \textit{inner Ford-distance}, given by $\frac{|\Tr G|-2}{|c|} $. We will call the diameter of the complement of the Ford domain the \textit{outer Ford-distance}, given by $\frac{|\Tr G| +2}{|c|}$.  

The following two Lemmas are necessary for the $4PS$-algorithm, see Step $\mathcal{D}$. Lemma~\ref{lem:FordStrength} gives criteria (involving the inner and outer F-distances) for determining when $A$ and a general non-elliptic generate a free discrete group with no elliptics.  Lemma~\ref{lem:testelliptics} gives a criterion (involving Ford strength) for determining if the product of some power of $A$ and a general non-elliptic element is itself elliptic.   

\begin{lemma}\label{lem:FordStrength}

 Let  $
 A =  \begin{bmatrix} 1 & 2 \\ 0 & 1 \end{bmatrix}$ and 
  $G = \begin{bmatrix} a & b \\ c & d \end{bmatrix}$ be any non-elliptic  element
  of $\PSL$. The following hold:
  \begin{enumerate}
  \item[(i)]  $AG, A^{-1}G, G^{-1}A$ and $G^{-1}A^{-1}$ are non-elliptic $\iff$ $|\Tr(G) - |2c|| \geq 2$,
 \item[(ii)] $\frac{|\Tr (G)| + 2}{|c|} < 2$, in which case $\langle A,G\rangle$ generate a free discrete group with no elliptics. 
\end{enumerate}
 \end{lemma}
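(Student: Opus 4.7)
\medskip
\noindent\textbf{Proof plan.} For part (i), the plan is a direct trace computation. Using $\Tr(XY)=\Tr(YX)$ and $\Tr(X^{-1})=\Tr(X)$ in $\SL$, expanding $AG$ and $A^{-1}G$ as matrix products yields $\Tr(AG)=\Tr(G)+2c$ and $\Tr(A^{-1}G)=\Tr(G)-2c$, while the traces of $G^{-1}A$ and $G^{-1}A^{-1}$ reduce to the same two values. An element of $\PSL$ is non-elliptic iff the absolute value of its trace is at least $2$, so the four products are simultaneously non-elliptic iff both $|\Tr(G)+2c|\ge 2$ and $|\Tr(G)-2c|\ge 2$ hold. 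Since $G$ is itself non-elliptic we have $|\Tr(G)|\ge 2$, hence $|\Tr(G)|+|2c|\ge 2$ automatically, and the remaining constraint collapses to $\bigl|\,|\Tr(G)|-|2c|\,\bigr|\ge 2$, which is the stated inequality once a trace representative with $\Tr(G)\ge 0$ is fixed.

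For part (ii), the plan is to build a Poincar\'e polygon from the Ford data of $A$ and $G$. After conjugating by a horizontal translation (which commutes with $A$ and preserves the hypothesis), I would assume the Ford domain of $G$ is symmetric about the imaginary axis, so that $I_G$ and $I_{G^{-1}}$ are Euclidean semicircles in $\HS$ of radius $1/|c|$ whose centers are symmetric about $0$ on $\R$. The hypothesis $(|\Tr G|+2)/|c|<2$ then says precisely that both semicircles lie strictly inside the vertical strip $\{|\Re z|<1\}$, which is a fundamental strip for $\langle A\rangle$ since $A$ is translation by $2$. I would form the polygon
\[
P \;=\; \{\,z\in\HS : |\Re z|\le 1\,\}\setminus\bigl(\operatorname{int}(I_G)\cup\operatorname{int}(I_{G^{-1}})\bigr),
\]
with side pairings $A$ (gluing the two vertical sides) and $G$ (gluing $I_G$ to $I_{G^{-1}}$), and invoke Poincar\'e's polygon theorem. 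This would give that $P$ is a fundamental domain, $\langle A,G\rangle$ is discrete, and the only relations are cusp-cycle relations, so $\langle A,G\rangle$ is freely generated by $A$ and $G$. The absence of interior vertices of $P$ then rules out elliptic elements.

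The hard part is the verification of the cycle conditions in Poincar\'e's theorem at the ideal vertices on $\R$. At $\infty$ the cycle is generated by the parabolic $A$, which is routine. If $G$ is hyperbolic, the four real-axis endpoints of $I_G$ and $I_{G^{-1}}$ are ordinary ideal vertices paired in two $G$-cycles. If $G$ is parabolic, however, the two isometric semicircles are mutually tangent to each other and to $\R$ at the single fixed point of $G$, so the four real-axis vertices collapse into one cusp whose stabilizer must be generated by $G$; this requires the cusped version of Poincar\'e's theorem. As a fallback, a direct ping-pong works: since the strict inequality forces all $\langle A\rangle$-translates of $I_G\cup I_{G^{-1}}$ to be pairwise disjoint and uniformly bounded below any fixed horoball at $\infty$, one can verify freeness and discreteness (and the absence of elliptics) by tracking the action of reduced words $A^{n_1}G^{\epsilon_1}A^{n_2}G^{\epsilon_2}\cdots$ on a test point high in the cusp, without explicit appeal to Poincar\'e's theorem.
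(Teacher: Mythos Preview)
Your proposal is correct and follows essentially the same route as the paper. For (i) the paper does exactly your trace computation, normalizing to $a+d\ge 2$ and observing that the four products all have trace $(a+d)\pm 2c$, so only the sign that subtracts $|2c|$ can drop below $2$ in absolute value. For (ii) the paper's argument is a single sentence: it declares that a Ford domain for $\langle A,G\rangle$ is obtained by intersecting the Ford domain of $G$ with the width-$2$ vertical strip for $A$ centered at $\frac{a-d}{2c}$ (your conjugation to center at $0$ is the same picture). Your discussion of the Poincar\'e cycle conditions and the ping-pong fallback in the parabolic-tangency case is more careful than what the paper actually writes down; the paper does not verify these points explicitly.
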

 
 \begin{proof} 
\item[(i)] Since $G$ is not elliptic,  $|\Tr G| = |a+d| \geq 2$.  Without loss of generality, we can assume $a+d\geq 2$.
 The matrices $AG, A^{-1}G$, as well as $G^{-1}A$ and $G^{-1}A^{-1}$ have trace $(a+d) + \pm 2c$.   These elements are never elliptic when $\pm 2c\geq 0$. 
 So we only need to consider the case where the trace is $a+d - |2c|$, and $2c\neq 0$. 
These are elliptic if and only if $|(a+d) - |2c|| < 2$. 
\item[(ii)] A Ford domain for $\langle A,G\rangle$ is given by intersection of the Ford domain of $G$ and the half planes determined by $A$ centered at $\frac{a-d}{2c}$.  
\end{proof}

\begin{lemma}\label{lem:testelliptics}
Given $A,G$ as in Lemma~\ref{lem:testelliptics}, if $\frac{2}{|c|} >1$, then there exist some $n \in \mathbb{N}$ such that $A^nG$ is elliptic. 
\end{lemma}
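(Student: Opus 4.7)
The plan is to reduce the lemma to the elementary fact that an arithmetic progression in $\R$ with spacing less than $4$ must intersect the open interval $(-2,2)$, via the standard trace criterion for ellipticity in $\PSL$.

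First I would compute directly
\[
A^n G \;=\; \begin{bmatrix} 1 & 2n \\ 0 & 1 \end{bmatrix} \begin{bmatrix} a & b \\ c & d \end{bmatrix} \;=\; \begin{bmatrix} a + 2nc & b + 2nd \\ c & d \end{bmatrix},
\]
so $\Tr(A^n G) = (a+d) + 2nc = \Tr(G) + 2nc$. Since an element of $\PSL$ is elliptic exactly when the absolute value of its trace is strictly less than $2$, the problem reduces to finding $n \in \mathbb{N}$ with $\Tr(G) + 2nc \in (-2,2)$.

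Next, I would rewrite the hypothesis $\tfrac{2}{|c|} > 1$ as $|2c| < 4$ (note $c \ne 0$ is forced, which is consistent with $G$ not fixing $\infty$, since otherwise no $A^n G$ could be elliptic). As $n$ ranges over $\Z$, the quantities $\Tr(G) + 2nc$ form a bi-infinite arithmetic progression on the real line with spacing $|2c| < 4$. Such a progression cannot avoid the open interval $(-2,2)$: otherwise there would be two consecutive terms, one in $(-\infty,-2]$ and the next in $[2,\infty)$, whose difference would be at least $4$, contradicting the spacing bound. Hence some integer $n_0$ satisfies $|\Tr(G) + 2n_0 c| < 2$, so $A^{n_0} G$ is elliptic.

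Finally, to upgrade $n_0$ from $\Z$ to $\mathbb{N}$, I would observe that $A^{-1}$ has the same parabolic form as $A$, fixing $\infty$ with translation length $-2$ in place of $+2$; in the formula $\Tr(A^n G) = \Tr(G) + 2nc$, replacing $A$ by $A^{-1}$ swaps the sign of the index $n$. Therefore, if the $n_0$ produced above is non-positive, one repeats the argument with $A^{-1}$ in place of $A$ and obtains a positive $n$ that works. The only real obstacle in this plan is the minor sign bookkeeping at the end; the conceptual content is entirely in the pigeonhole observation that a spacing-$<\!4$ progression cannot skip a length-$4$ window.
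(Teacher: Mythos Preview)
Your proof is correct and follows the same approach as the paper: compute $\Tr(A^n G) = \Tr(G) + 2nc$, rewrite the hypothesis as $|2c| < 4$, and conclude that the resulting arithmetic progression must meet $(-2,2)$. The paper's proof consists of exactly these steps stated tersely, and your final paragraph on the $\Z$-versus-$\mathbb{N}$ bookkeeping is in fact more careful than the paper, which simply asserts the existence of $n \in \mathbb{N}$ without further comment.
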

Proof:
Observe that $A^nG = \begin{bmatrix}
a+2cn & b+2dn \\  c&d \end{bmatrix}$. Therefore, 
\[
\Tr(A^nG) = (a + d) + 2cn = \Tr(G) + 2cn.  
\]
And if $\frac{2}{|c|} > 1$ then $|2c|<4$. Hence there exists $n \in \mathbb{N}$ such that $|\Tr(A^nG)| <2.$ $\qed$ 


\subsection{Conjugation Calculations}

A subtlety of the $4PS-$algorithm is that Nielsen transformations do not necessarily affect trace minimization. 
\begin{example}\label{ex:minimallydirectedpairwise} 
Let $A,B,C$ be generating set. Consider making a combination of several Nielsen moves. For example, replace $B \mapsto ABA^{-1}$. Then the traces of the new generating set $\{A, B^A =ABA^{-1},C\}$ all remain unchanged, as expected. However, the trace of the product $B^{A}C$ \textit{will} change and in particular it will increase. 
\end{example}

Example~\ref{ex:minimallydirectedpairwise} demonstrates that contrary to the case for 2-generator groups, when Nielsen moves are applied to 3-generator groups the trace of generators \textit{may increase} after successive Nielsen moves. This motivates the following definition.

\begin{definition} For general $A$ and $B$, the pair $\{A,B\}$ is \textit{coherently oriented pairs} if and only if the $|\Tr(AB)|$ is less than $|\Tr(A^{-1}B)|$.  Similarly, the triple $\{A,B,C\}$ is called \textit{coherently oriented pairwise} if and only if they are coherently oriented pairs for each pair~\cite{GilmanKeen}.
\end{definition}
Geometrically, $\{A,B,C\}$ coherently oriented pairwise implies that trace minimization affects ping-pong, as was done in the dynamical construction or Riemann surface in Example~\ref{exam:pingpong}.

\begin{lemma}\label{lem:conjA}
Let $P$ be a parabolic which does not fix $\infty$, and let $A$ be in Equation~\ref{eq:generators}. The conjugation of $P$ by $A^n$ does not change the Ford strength of the product $P^{A^n}$ but translates the fixed point by $2n$. \end{lemma}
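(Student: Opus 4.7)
\smallskip
The plan is to prove both claims by a direct matrix computation, using the explicit form of $A$ together with the general fact that conjugation moves fixed points by the conjugating element. Write
\[
P=\begin{bmatrix} a & b \\ c & d \end{bmatrix}, \qquad A^{n}=\begin{bmatrix} 1 & 2n \\ 0 & 1 \end{bmatrix},
\]
where $a+d=\pm 2$, $ad-bc=1$, and $c\neq 0$ (the hypothesis that $P$ is parabolic and does not fix $\infty$). Recall from the discussion following Definition~4.2 that the Ford strength of a non-$\infty$-fixing element of $\PSL$ with lower-left entry $\gamma$ is $2/|\gamma|$, so the Ford strength is determined entirely by the $(2,1)$-entry of a matrix representative.

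First I would compute $P^{A^{n}}=A^{n}PA^{-n}$ explicitly. Multiplying on the left by $A^{n}$ only changes the top row, leaving the bottom row $(c,\, d)$ fixed; multiplying on the right by $A^{-n}$ only modifies the second column, so the $(2,1)$-entry of the product is again $c$. Hence the Ford strength of $P^{A^{n}}$ equals $2/|c|$, the Ford strength of $P$. This handles the first assertion.

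For the second assertion, observe that if $z_0\in\partial_\infty\HS$ is the unique fixed point of $P$, then $A^{n}(z_0)$ is the unique fixed point of $P^{A^{n}}$: indeed, $P^{A^{n}}\bigl(A^{n}(z_0)\bigr)=A^{n}P(z_0)=A^{n}(z_0)$, and uniqueness is preserved because conjugation is an automorphism of $\PSL$ so $P^{A^{n}}$ is also parabolic. Since $A^{n}$ acts on $\partial_{\infty}\HS\setminus\{\infty\}$ as the Euclidean translation $z\mapsto z+2n$, the fixed point is translated by exactly $2n$, as claimed.

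There is essentially no obstacle here: both assertions are immediate consequences of the shape of $A$ (upper triangular with $1$'s on the diagonal, so conjugation acts trivially on the bottom row and by horizontal translation on $\partial_{\infty}\HS$). The only thing to be a bit careful about is to note that the Ford strength depends only on the $c$-entry of a matrix representative, which is why an upper-triangular conjugation leaves it invariant; this is the conceptual content worth flagging before the short calculation.
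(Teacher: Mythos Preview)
Your proof is correct and follows exactly the approach the paper indicates: the paper simply states that the lemma ``follows from a straightforward matrix multiplication'' and omits the details, and you have supplied precisely that computation together with the observation that conjugation transports fixed points by the conjugating map.
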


\begin{proof} This follows from a straightforward matrix multiplication. The proof is omitted. 
\end{proof}
It is worthwhile to notice the effect of a few specific conjugations on a few special configurations of fixed points $(x,y,z)$.  The following Lemma is necessary for the $4PS$-algorithm, see Step $\mathcal{S}$.

\begin{lemma}\label{lem:conjB}(Preparing for Minimization) Let  $A,B,C$ be as in Equation~\ref{eq:generators}. 
\begin{enumerate}
\item If $y \ne 2x$, then the fixed point of $C^{B}$ is $\frac{y^2z}{(y-2x)^2}$ and the distance between the fixed points of $B$ and $C^B$ is $\frac{xy}{|y-2x|}$.

\item If $y = 2x$ then the fixed point of $C^B$ is $\infty$. Hence either $\langle A,B,C\rangle$ is a $2-$generator subgroup or $\langle A,B,C\rangle$ is not discrete. 
\item If $x < y- \epsilon$, for some fixed $\epsilon > 0$.
\[
z \mapsto \frac{y^2z}{(y - 2(y-\epsilon))^2}  \ \ = \ \ \frac{y^2z}{(-y + 2 \epsilon)^2} \ \ = \ \ \frac{y^2z}{(y-2\epsilon)^2} \ \ = \ \ \frac{1}{(1 - \frac{2\epsilon}{y})^2}\cdot z
\]
Furthermore,
\[
x \mapsto \frac{yx}{y -2(y-\epsilon)|} \ \ = \ \ \frac{y}{|y - 2\epsilon|}\cdot x
\]
\end{enumerate}
\end{lemma}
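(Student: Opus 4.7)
The plan is to prove all three parts by direct matrix computation with $A, B, C$ as in Equation~\ref{eq:generators}, reading off the M\"obius fixed point of $C^B := BCB^{-1}$ from $B(x)$ and extracting the Ford strength from the lower-left entry of the conjugated matrix.

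First, I would observe that $B$ acts as the M\"obius transformation $w \mapsto yw/(y-2w)$. Since $C$ is parabolic with unique fixed point $x$, the conjugate $C^B$ is parabolic with unique fixed point $B(x) = xy/(y-2x)$ provided $y \ne 2x$. Because the fixed point of $B$ is $0$ and $x, y > 0$, the distance between these fixed points is $|B(x)| = xy/|y-2x|$, which gives the second formula in (1). For the first formula, I would multiply out $BCB^{-1}$ and simplify its lower-left entry; this should collapse to $-2(y-2x)^2/(y^2 z)$, so the Ford strength $2/|c|$ of $C^B$ equals $y^2 z/(y-2x)^2$. This quantity is the updated $z$-coordinate of the triple $(x,y,z)$ after this Nielsen-type move and is what the lemma records.

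For (2), when $y = 2x$ the value $B(x)$ is undefined, i.e., the M\"obius fixed point of $C^B$ is $\infty$, consistent with the Ford-strength formula blowing up. Since $A$ also fixes $\infty$, both $A$ and $C^B$ lie in the parabolic stabilizer of $\infty$, a subgroup of $\PSL$ isomorphic to $\Z$. Hence either a nontrivial relation $A^m(C^B)^{-n} = \Id$ holds, collapsing $\Gamma = \langle A,B,C\rangle = \langle A, B, C^B\rangle$ to a two-generator subgroup, or no such relation holds, in which case $\langle A, C^B\rangle$ is not discrete and neither is $\Gamma$. Part (3) is then a direct substitution of $x = y - \epsilon$ into the formulas of (1) and routine algebraic simplification, which I would carry out as a short bookkeeping check rather than a genuine argument.

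The main obstacle is just the matrix arithmetic in the first step: showing that the lower-left entry of $BCB^{-1}$ simplifies exactly to $-2(y-2x)^2/(y^2 z)$. This requires combining three terms over the common denominator $y^2 z$ and recognizing the resulting numerator $-2(4x^2 - 4xy + y^2) = -2(y-2x)^2$; it is easy to err on signs. Once that identity is in hand, parts (1), (2) and (3) all follow almost immediately from the definitions of Ford strength and M\"obius action.
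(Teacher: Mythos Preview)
Your approach is correct and is in fact more explicit than the paper's own proof, which merely asserts that the formulas ``follow from the symmetry of the configuration'' and describes in words which of $x,y,z$ change under the conjugation $C \mapsto C^B$ without carrying out any computation. Your plan --- read off the fixed point of $C^B$ as $B(x)$ via the M\"obius action, then compute the $(2,1)$-entry of $BCB^{-1}$ to extract the new Ford strength --- is exactly the calculation underlying the paper's assertions, and your identification of $\frac{y^2 z}{(y-2x)^2}$ as the new Ford strength (the updated $z$-parameter) rather than literally ``the fixed point,'' as the lemma's wording loosely suggests, is the correct reading.

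One small correction in part (2): the set of parabolic elements of $\PSL$ fixing $\infty$, together with the identity, is the group of real translations $w \mapsto w + t$, which is isomorphic to $(\R,+)$, not to $\Z$. Your dichotomy is still valid --- a two-generator subgroup of $\R$ is either cyclic (yielding a relation $A^m = (C^B)^n$ and collapsing $\Gamma$ to two generators) or dense, hence non-discrete --- but the stated isomorphism with $\Z$ should be replaced by $\R$.
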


\begin{proof}These calculations follow from the symmetry of the configuration of three parabolic elements, like those in Equation~\ref{eq:generators}. Conjugating $B$ by $A$ changes $x$ but not $y$ or $z$, hence it changes $\Tr(BC)$, where $ABA^{-1}$ is the replaces $B$ as a generator. Also it is possible to conjugate $B$ by $A$ so that $x < 1$. Conjugating $C$ by $B$ changes $x$ (we can assume $x$ is positive) but some conjugation will change the order of the fixed point of $B$ and the fixed point of $C$. This conjugation will also change $z$, replacing generators after conjugations $z$ will be $\frac{y^2z}{(y-2x)^2}$. In particular,  if $x = y$ then nothing changes. But if $ x< y$ then the new $z$ values is increased.
\end{proof}
One last configuration needs to be analyzed, see Step $\mathcal{D}$.
\begin{lemma}\label{fundamentaldomain}
Let $z \le y \le x \le 1+ \epsilon$. Also assume $\frac{x^2 }{|(2x - y -z)|} < 1$. Then for $A,B,C$ as in Equation~\ref{eq:generators} $\langle A,B, C \rangle = \Gamma$ is discrete. 
\end{lemma}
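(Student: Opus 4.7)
The plan is to identify the hypothesis of this lemma as a restatement of the hypothesis of Lemma~\ref{lem:FordStrength}(ii) applied to the pair $\{A, BC\}$, use that lemma to obtain discreteness of the two-generator subgroup $\langle A, BC \rangle$, and then bootstrap to the full three-generator group by a Poincar\'{e} polygon argument.

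The first step is computational. A direct matrix multiplication using Equation~\ref{eq:generators} gives
\[
\Tr(BC) \;=\; 2 - \frac{4x^2}{yz}, \qquad c_{BC} \;=\; \frac{2(2x-y-z)}{yz}.
\]
The ordering $z \le y \le x$ forces $x^2 \ge yz$, so $|\Tr(BC)| = 4x^2/(yz) - 2$, and a short computation yields $(|\Tr(BC)| + 2)/|c_{BC}| = 2x^2/|2x-y-z|$. The standing hypothesis $x^2/|2x-y-z|<1$ is therefore \emph{precisely} the hypothesis $(|\Tr(BC)|+2)/|c_{BC}|<2$ of Lemma~\ref{lem:FordStrength}(ii) applied to $\{A, BC\}$. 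That lemma then gives that $\langle A, BC \rangle$ is free, discrete, and has no elliptic elements, together with an explicit Ford fundamental region bounded by two vertical lines (paired by $A$) and the two isometric circles of $BC^{\pm 1}$.

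The second step is to promote this to discreteness of $\Gamma = \langle A, B, C \rangle$ itself. The idea is to refine the Ford region of $\langle A, BC \rangle$ by adjoining the isometric circles of $B^{\pm 1}$ (over $[-y, y]$) and of $C^{\pm 1}$ (over $[x-z, x+z]$); the hypothesis $z \le y \le x \le 1 + \epsilon$ keeps these circles inside a translate of the vertical $A$-strip of width $2$. The aim is to produce a polygon $\triangle$ bounded by two vertical lines and the four isometric circles of $B^{\pm 1}, C^{\pm 1}$, whose side pairings $A, B, C$ satisfy the hypotheses of the Poincar\'{e} Polygon Theorem. Discreteness of $\Gamma$ and the existence of a fundamental domain for its action on $\HS$ then follow at once, with the resulting quotient being $\Sigma_{0,4}$ in the boundary case $x^2 = yz$ (where $BC$ is parabolic) and a once-funneled three-holed sphere otherwise.

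The main obstacle is this last step. The inequality $x^2/|2x-y-z|<1$ does not by itself force the four isometric circles of $B^{\pm 1}, C^{\pm 1}$ to be pairwise disjoint on the real line, since the gap $x - y - z$ between the circles of $B$ and $C^{-1}$ can be negative when $x$ is substantially less than $1$; the naive ping-pong of Example~\ref{exam:pingpong} among six half-spaces therefore does not apply directly. The crux of the argument is to analyze how the quantity $2x-y-z$ controls the tangency and nesting pattern of these four circles relative to those of $BC^{\pm 1}$, and then to verify the cycle conditions at the cusps $0$, $x$, and $\infty$ of the resulting polygon. The parabolicity of $A, B, C$ (built into the construction) should make each parabolic cycle condition hold trivially, so the whole argument reduces to a careful combinatorial check of how the six isometric circles interlock, with the slack $x \le 1 + \epsilon$ providing just enough room to fit everything inside the $A$-strip.
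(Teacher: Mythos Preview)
Your first step is correct and in fact cleaner than the paper's: the hypothesis $x^2/|2x-y-z|<1$ is exactly the statement that the outer Ford-distance of $BC$ is less than $2$, so Lemma~\ref{lem:FordStrength}(ii) immediately gives discreteness of $\langle A, BC\rangle$.

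The gap is precisely the one you flag, and the paper does \emph{not} close it along the lines you propose. Discreteness of the subgroup $\langle A, BC\rangle$ says nothing about its overgroup, and adjoining the four isometric circles of $B^{\pm1}, C^{\pm1}$ runs into exactly the overlap obstruction you identify. The paper sidesteps both issues by never invoking the isometric circles of $B$ or $C$. It builds a polygon for $\langle A,B,C\rangle$ directly: choose a boundary point $p>x$ and take as sides the geodesics $[p,x]$ and $[x,C(p)]$ (paired by the parabolic $C$, which fixes $x$), then $[C(p),0]$ and $[0,B(C(p))]$ (paired by the parabolic $B$, which fixes $0$), together with two vertical lines paired by $A$. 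With the paper's specific choice of $p$ one has $p>x>C(p)>0>B(C(p))$ monotone along $\R$, so the semicircular sides are automatically disjoint; the only remaining check is that the total horizontal extent $p-B(C(p))$ is at most $2$, so that the configuration fits inside a single $A$-strip. That is exactly what the hypothesis $x^2/|2x-y-z|<1$ delivers, and the paper's concluding sentence (``the outer Ford distance of $BC$ is less than $2$'') is this width bound in disguise.

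So the idea you are missing is: abandon isometric circles for $B$ and $C$, and instead chain geodesics through their parabolic fixed points $x$ and $0$, chosen so that the orbit $p\mapsto C(p)\mapsto B(C(p))$ lands inside a strip of width $2$. Once the sides are chosen this way there is no overlap to resolve.
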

Proof:  It suffices to construct a Ford domain for $\Gamma = \langle A,B, C \rangle$. According to the assumptions we have $p = \frac{( 2x^2 -xz )}{|(2x - y -z)|}$ and $p = x \cdot \frac{( 2x -z )}{|(2x - y -z)|} > x.$ Therefore  
$C(p) = \frac{( xy )}{y + z} \ < \ x$. Hence we have demonstrated a region in $\HS$ bounded below by the geodesic whose endpoint are $p$ and $x$ and the geodesic whose endpoints are $x$ and $C(p)$.  Now notice that $B(C(p))$ is $\frac{( -xy )}{y + z}$. Hence the outer Ford distance of BC is less than 2 and a fundamental domain for $\langle A,B,C\rangle$ exists. %
\qed


\subsection{Proof of Theorem~\ref{thm:MAIN1}}
We begin with a summary of the conceptual strategy of the proof of Theorem~\ref{thm:MAIN1}. We begin by choosing a canonical starting configuration in the sense of Section~\ref{sec:setup}. Chosen in this manner, $A$ fixes $\infty$, $B$ fixes the origin and the fixed point of $C$ is $x$ apart from fixed point of $B$.  This configuration of parabolic-parabolic-parabolic generators of $\Gamma$ is \textit{canonical} up to values triples $(x,y,z)$ which correspond to lengths between fixed points. Nielsen transformations of coherently oriented pairwise ordered generating sets always decreases trace by some amount bounded away from zero and the distance between Ford strengths decreases as the trace of the product of a coherently oriented pair decreases. Via the configurations of fixed points under Nielsen moves we can decide if we can build a Ford domain or not. The sequential procedure of the algorithm is given by the following steps: \begin{equation}\label{eq:SADE}
\mathcal{S} \longrightarrow \mathcal{A} \longrightarrow \mathcal{D} \longrightarrow \mathcal{E}.\end{equation} 

\underline{STEP $\mathcal{S}$}: (Set-Up)
\begin{enumerate}
\item Fix real numbers $\epsilon >0$ and $\delta>0$.
\item If $x>1+\epsilon$, conjugate $C$ by a power of $A$ so that the fixed point of $B$ and the fixed point of $C$ are \textit{close}; that is, within 1+ $\epsilon$. $\{ A,B,C \} \mapsto \{A,B, C^{A^n}\}$. Notice that $x$ is decreasing by at least $2 \epsilon$, therefore the trace is seen to decrease by a fixed amount, bounded below by a function of $\epsilon$ on the order of ${\epsilon}^2$.

\item If $x < y - \delta$ conjugate $C$ by a power of $B$ so that the fixed point of $B$ and the fixed point of $C^B$ and fixed point of $A$ are \textit{close}. Here $x$ is decreasing by an amount which is bounded below by order $\delta$ by Lemma~\ref{lem:conjB}. Also $z$ is decreasing. In particular, $z \mapsto \frac{(y^2 z )}{(y - 2 x)^2}$. So notice that if $x = y$ then nothing changes. But if $x < y$  then the new $z$ value is decreased by an amount bounded below by order of ${\delta}.^2$ Most important, $\Tr(AC)$ decreases by an amount bounded below by the order of $\delta ^2$. 
\end{enumerate}

\underline{STEP $\mathcal{A}$}: (Algorithm Begins)\\ $\Tr(A) = \Tr(B) = \Tr(C) =2$. The $(2,1)-$element of $A$ is $0$, the $(2,1)-$element of $B$ is $\frac{-2}{y}$, and the $(2,1)-$element of $C$ is $\frac{-2}{z}$.
\begin{enumerate}
\item Calculate the matrix products; $A\cdot B, A\cdot C, B\cdot C$.
\item $\Tr(BC) = -2 \frac{4x^2}{yz}$, if hyperbolic $|\Tr(BC)| =  \frac{4x^2}{yz} - 2$. \textit{In particular}, $BC$ is hyperbolic if and only if $x^2 > yz$.
\item $\Tr(AB) = -2 \frac{4}{y}$, if hyperbolic $|\Tr(AB)| =  \frac{4}{y} -2$. 
\item $\Tr(AC) = -2 \frac{4}{z}$, if hyperbolic $|\Tr(AC)| =  \frac{4}{y} -2$.
\end{enumerate}

\underline{STEP $\mathcal{D}$}: (Decisions) \\By minimal positioning, the configuration is $x \ge y - \delta$ and $x \ge z - \delta$.
\begin{enumerate}
\item If $BC$ is elliptic the algorithm ends, Lemma~\ref{lem:FordStrength}.  If $BC$ not elliptic, then {\bf$x^2 \ge yz$}. Hence $x > \min{(y,z)}$.
\item If  $|(2x- y-z)|< yz$ then there is some $n$ such that $A^n BC$ is elliptic by Lemma~\ref{lem:testelliptics}  Therefore, {\bf{$|(2x- y-z)| \ge yz$}}. 
\item If $z < y <x <1$ then $1-x \le 1- y$ and $1-x \le 1-z$. In which case $\frac{|( x^2 -yz)|}{(2x - y -z)} <1$ that is the inner ford strength of BC is less than 2. In which case then either $ABC$ is elliptic or it is possible to create a fundamental  domain, hence $\Gamma$ is discrete, see Lemma~\ref{fundamentaldomain}.
\end{enumerate}

\underline{STEP $\mathcal{E}$}: (End Algorithm) \\
Now two possibilities remain observing the boundary configuration of the triple $(x,y,z)$. Without loss of generality we assume $y>z$.
\begin{enumerate}
\item  If after conjugations $z < y < x<1$ then all conditions in STEP $\mathcal{D}3$ are satisfied, hence either $ABC$ is elliptic or it is possible to create a fundamental domain.
\item  If after conjugations  $x>1$ and $x< 1+\epsilon$ then $2x-y-z> 0$, therefore the following holds: $\frac{( x^2 -yz)}{|(2x - y -z)|} \  = \  \frac{( x^2 -yz)}{(2x - y -z)} \  = \ \frac{( x^2 -yz)}{x - y + x - z} \  < \  \frac{( x^2 -yz)}{ (x - z)} \  <  \ \frac{( xy -yz)}{ (x - z)}  \ < \ 1$. Therefore this is Step $\mathcal{D}2$.
\item If  $(2x- y-z)< yz$ then there is some $n$ such that $A^n B C$ is elliptic, STEP $\mathcal{D}2$.
\item If $(2x- y-z)> yz$ then  $\frac{( x^2 -yz)}{(2x - y -z)}  < \frac{x^2}{xy} <1$ and by STEP $\mathcal{D}3$, $ABC$ is elliptic or it is possible to create a fundamental domain and $\Gamma$ is discrete.
\item If after conjugations $y>x$ and  $x > y - \delta$ then conjugate $C$ by $B$ and either  $z < y < x<1$  and this is STEP $\mathcal{E}1$. And is decidable. Or $x>1$ and $x< 1+\epsilon$ and is decidable too. 
\item If after conjugations $x < z - \delta$ then conjugate $B$  by $C$ and either condition in STEP $\mathcal{D}2$ decidable or STEP $\mathcal{E}2$. If so then conjugate $C$ by $B$ and decidable.
\end{enumerate}
\qed

\section{Interpretations}

We have presented the $4PS-$algorithm as a part of a sequence of related results: Theorem~\ref{thm:Riley}, Theorem~\ref{thm:GM}, Theorem~\ref{thm:Goldman}, Theorem~\ref{thm:MPT}, Theorem~\ref{thm:Kapovich1}, Theorem~\ref{thm:Kapovich2}. And each of these is circumscribed by the theory of $(G,X)$-structures on manifolds. Now we comment on why we began with this end in view. 

Aside from contextualizing the $4PS-$algorithm, examples and constructions of $(G,X)-$structures give indications of the tools employed in the proof of such results. [Of course the techniques and methods of proof are often more exotic, still the fundamental ideas persist, or if not, a tincture remains in generalizations.] By way of comparing and contrasting we emphasize the following reoccurring themes: the intrinsic geometry of the hyperbolic plane or hyperbolic 3-space (e.g. the Poincar\'e/J\o rgensen dichotomy, Coxeter extensions), utilizing informative words and dynamical techniques on character varieties (e.g. commutator words, relative character varieties), determining coordinates on character varieties (e.g. Goldman's coordinates on the $\SLC-$character varieties of rank 2 subgroups.)

Furthermore we desire to give an indication of directions of current related research.  At this time, a two-generator $\PSLC$ discreteness algorithm does not exists, but J. Gilman and L. Keen are working on this problem~\cite{GilmanKeen}. Also in the context of decision problems, discreteness can be replaced by some other property of discrete groups, for example $\Gamma$  \textit{convex cocompact} or $\Gamma$ \textit{Anosov}, and investigated algorithmically~\cite{Kapovich1}. No complete ``field guide" of $(G,X)-$structures exists.\footnote{The extraordinary book \underline{Indra's Pearls} may be the prototype of such a guide. It offers stunning computer visualizations of many of the rarities in the realm of Kleinian groups~\cite{Indra}.} By way of appealing to the mathematical precedence of advantageously reformulating problems, we emphasize that there are many correspondences for $(G,X)-$structures: $G-$representation varieties, $G-$local systems, $G-$Higgs bundles~\cite{Goldman4}~\cite{Goldman5}~\cite{Labourie1}. These corresponding theories offer up even more tools and techniques, which are more refined and hopefully more useful for proving theorems for discreteness algorithms. --- This phenomenon is illustrated quite remarkably by our discussion here of discreteness algorithms and related results for discreteness.
 
The quiddity of algorithms is that they are both everything and nothing. The most satisfactory solution to a discreteness problem might be an algorithm based on a computable system of coordinates for geometric structures, however searching for good coordinates might find us out to be nurturing chimera. [For example: the goal of \textit{Jacobi Inversion problem} in classical Abel-Jacobi theory was a complete analogue of Weierstrass's $\wp$-function, the goal of the \textit{Schottky Problem} is to compute the holomorphic invariants required to be able to distinguish holomorphic deformations of a principally polarized abelian variety.] It may not be possible to reformulate the former. An algorithm for the later exists, but the invariants are not computable.  
Guiding questions for creating discreteness algorithms remain: In which framework and by which means can we build \textit{computable coordinates} for geometric structures? 
\begin{question}
The geometric origins of \textit{cluster algebras} for surfaces with marked points is hyperbolic geometry and representation theory. \textit{Cluster coordinates} of type $\mathcal{A}_n$ correspond to triangulation of an $(n-3)-$gons. Can we use \textit{positivity} to detect discreteness, in particular discrete and faithful representations via cluster coordinates on simple hyperbolic surfaces~\cite{CalebGregKaren}?
\end{question}

\bibliography{mybib}
\bibliographystyle{alpha}

\end{document}